\newtheorem{Theo}{Theorem}
\newtheorem{prop}[Theo]{Proposition}
\newtheorem{cor}[Theo]{Corollary}
\newtheorem*{cornonumber*}{Corollary}
\newtheorem{lemma}[Theo]{Lemma}
\def\bt{\begin{Theo}}
\def\bl{\begin{lemma}}
\def\bp{\begin{proof}}
\def\et{\end{Theo}}
\def\el{\end{lemma}}
\def\ep{\end{proof}}
\theoremstyle{remark}
\newtheorem*{rem}{Remark}
\newcommand{\Z}{\mathbb{Z}}
\newcommand{\N}{\mathbb{N}}
\newcommand{\Q}{\mathbb{Q}}
\newcommand{\R}{\mathbb{R}}
\newcommand{\CC}{\mathbb{C}}
\newcommand{\F}{\mathcal{F}}
\newcommand{\OO}{\mathcal{O}}
\newcommand{\A}{\mathcal{A}}
\newcommand{\eps}{\varepsilon}
\newcommand{\frC}{\mathfrak{C}}
\newcommand{\f}{\frac}
\newcommand{\p}{\mathfrak{p}}
\newcommand{\e}{\mathfrak{e}}
\newcommand{\q}{\mathfrak{q}}
\newcommand{\NN}{\mathfrak{N}}
\newcommand{\m}{\mathfrak{m}}
\newcommand{\ff}{\mathfrak{f}}
\DeclareMathOperator{\sgn}{sgn}
\DeclareMathOperator{\reg}{Reg}
\DeclareMathOperator{\vol}{Vol}
\def\cal#1{\mathcal{#1}}
\def\ba{\begin{align*}}
\def\ea{\end{align*}}
\def\frak#1{\mathfrak{#1}}
\def\half{{\f{1}{2}}}
\title[Explicit counting of ideals and a Brun-Titchmarsh inequality]{Explicit counting of ideals and a Brun-Titchmarsh inequality for the Chebotarev Density Theorem}
\begin{document}

\author{Korneel Debaene}
\address{K. Debaene, Georg-August-Universit\"at G\"ottingen, Bunsenstrasse 3-5, 37073 G\"ottingen, Germany}
\email{kdebaen@mathematik.uni-goettingen.de}

\begin{abstract} We prove a bound on the number of primes with a given splitting behaviour in a given field extension. This bound generalises the Brun-Titchmarsh bound on the number of primes in an arithmetic progression. The proof is set up as an application of Selberg's Sieve in number fields. The main new ingredient is an explicit counting result estimating the number of integral elements with certain properties up to multiplication by units. As a consequence of this result, we deduce an explicit estimate for the number of ideals of norm up to $x$.
\end{abstract}
\keywords{}
\subjclass[2010]{}

\maketitle

\section{Introduction and statement of results}
Let $q$ and $a$ be coprime integers, and let $\pi(x, q, a)$ be the number of primes not exceeding $x$ which are congruent to $a$ mod $q$. It is well known that for fixed $q$, \[\lim_{x\rightarrow \infty} \f{\pi(x, q, a)}{\pi(x)} = \frac{1}{\phi(q)}.\]
A major drawback of this result is the lack of an effective error term, and hence the impossibility to let $q$ tend to infinity with $x$. The Brun-Titchmarsh inequality remedies the situation, if one is willing to accept a less precise relation in return for an effective range of $q$. It states that \[\pi(x, q, a) \leq \frac{2}{\phi(q)}\frac{x}{\log x/q}, \quad \textup{ for any }x\geq q.\]
While originally proven with $2 + \eps$ in place of $2$, the above formulation was proven by Montgomery and Vaughan \cite{MontgomeryVaughan} in 1973. Further improvement on this constant seems out of reach, indeed, if one could prove that the inequality holds with $2-\delta$ in place of $2$, for any positive $\delta$, one could deduce from this that there are no Siegel zeros. However, improvements concerning the factor $\f{1}{\log x/q}$ have been made, and we refer to \cite{Maynard} for an overview of the state of the art.

A vast generalisation of the choice of a modulus $q$ and a residue class $a$, is the choice of a Galois extension $L/\Q$ with Galois group $G$, and a conjugacy class $C\subseteq G$. We then define \[\pi_C(x,L/\Q) =|\{p\leq x \mid p \textup{ unramified in $L/\Q$, Frob}_{L/\Q}(p)\in C \},\] the number of primes not exceeding $x$ with splitting behaviour in $L/\Q$ determined by $C$. The famous Chebotarev Density Theorem gives for fixed $L$ the asymptotic equivalence \[\pi_C(x,L/\Q) \sim \frac{|C|}{|G|}\textup{Li}(x).\]

An version of this result with effective error terms was first given by Lagarias and Odlyzko \cite{LO}, in the range $\log  x \geq 10[L:\Q](\log |\Delta_L|)^2$. Inspired by the Brun-Titchmarsh inequality, one can look for upper bounds which are off only by a constant factor, with a range of validity as wide as possible. In this respect, Lagarias, Montgomery and Odlyzko \cite{LMO} proved that \begin{equation}\label{eq:uptoconstanteq}\pi_C(x,L/\Q) \ll \frac{|C|}{|G|}\frac{x}{\log x},\end{equation} when $\log x \gg (\log |\Delta_L|) (\log_2 |\Delta_L|) (\log_3 |\Delta_L|)$. A recent paper by Thorner and Zaman \cite{TZ} gives a substantial improvement on the range where the inequality holds, using state-of-the-art information on the location of the zeros of Hecke $L$-functions. Consider an abelian subgroup $H\leq G$ such that $H\cap C$ is not empty, and let $K$ be the fixed field of $H$, of degree $n$. For a character $\chi\in \widehat{H}$, let $\ff_\chi$ be the conductor, and define \[\mathcal{Q}=\max_\chi \{N_{K/\Q}(\ff_\chi)  \mid \chi \in \widehat{H}\}.\] Their result is then that the inequality \eqref{eq:uptoconstanteq} holds in the range $x\geq 2|\Delta_K|^{163}\mathcal{Q}^{123} + |\Delta_K|^{55}\mathcal{Q}^{87}n^{68n}$. They also proved a stronger version, where the unspecified constant factor in \eqref{eq:uptoconstanteq} is replaced by $2$ plus an error term, which is valid in a similar but slightly smaller range.

The main result in this paper is the following explicit bound on $\pi_C(x,L/\Q)$. Let $\Delta_K$ be the discriminant of $K$, $h_K$ the class number of $K$, $\textup{Reg}_K$ the regulator of $K$ and $\kappa_K$ the residue of the Dedekind zeta function $\zeta_K(s)$ at $s=1$.
\bt\label{maintheorem} Let $L/\Q$ be a Galois extension with Galois group $G$, and let $C\subseteq G$ be a conjugacy class. Let $H$ be an abelian subgroup of $G$ such that $H\cap C \neq \emptyset$, and let $K$ be the fixed field of $H$, of degree $n$. Let $\ff \subseteq \O_K$ be the conductor of $L/K$. Define the constant \[c_K = n^{31n^3}(\textup{Reg}_Kh_K)^3(1+\log \textup{Reg}_Kh_K)^{3(n-1)^2} [L:K]^nN(\ff) \kappa_K^{-2n}. \] Then, for all $x$ with $x(\log x)^{-5n^2}\geq c_K$, we have 
\[\pi_C(x,L/\Q) \leq \frac{|C|}{|G|}\left(2+\frac{41n^2\log\log(x)}{\log(x/c_K)}\right)\frac{x}{\log(x/c_K)}.\]
\et
\begin{rem} In terms of the discriminant $\Delta_K$, our result gives a wider range than the previous results. By the analytic class number formula, $\textup{Reg}_Kh_K$ is interchangeable with $\sqrt{|\Delta_K|}\kappa_K$ up to a factor bounded in terms of $n$. Stark \cite{Stark} gave an effective (but unfortunately not explicit) lower bound for the residue, namely that there is an effective absolute constant $c$ such that $\kappa_K \geq \frac{c}{(n!)n|\Delta_K|^{1/n}}$, where $n|\Delta_K|^{1/n}$ can be replaced by $\log|\Delta_K|$ if $K$ has no quadratic subfield, and $n!$ can be dropped if $K$ has a normal tower over $\Q$. So all in all, our result gives a Brun-Titchmarsh bound in the range $x\gg_{n,[L:K],N(\ff),\eps} |\Delta_K|^{7/2 - 3/n +\eps}$. In situations where the discriminant of $K$ dominates the other factors, this is superior to the hitherto widest known range given by \cite{TZ}.\end{rem}


The proof follows a completely different path than the previous results mentioned on $\pi_C(x,L/\Q)$. In particular, we stress that we do not use any analytic information on zeta functions or their zeros. The proof is built up in the following stages. First and foremost, we develop an explicit counting method to count the number of integral elements, with certain properties, of norm up to $x$ and up to multiplication by units. This is achieved in Theorem \ref{keylemma}, by translating this counting problem to a lattice point counting problem and applying a Lipschitz argument. We take particular care to minimize the field constants appearing, by employing the so-called Schmidt's partition trick, and by carefully treating the occurring successive minima. The utility of this explicit counting result will be twofold. Firstly, it allows us to set up an explicit Selberg sieve in number fields, and secondly, through Artin's reciprocity law, the primes with given splitting behaviour can be expressed as integral elements with certain properties, which our counting result can estimate.

To illustrate the strength of our counting result, we mention that one natural and immediate application is the following explicit estimate for the number of ideals of bounded norm. We will use the practical notation $f=O^{*}(g)$ which means that $|f|\leq g$.

\begin{cor}\label{explicitideals} Let $K$ be a number field of degree $n$, and let $a_k$ be the number of ideals of norm $k$. Then, for all $x\geq 1$, \[\sum_{k\leq x}a_k = \kappa_K x + O^{*}\left(n^{10n^2}(\textup{Reg}_Kh_K)^{1/n}(1+ \log\textup{Reg}_Kh_K)^{\frac{(n-1)^2}{n}} x^{1-\frac{1}{n}}\right).\]
Moreover, let $\mathfrak{c}$ be an element of the class group of $K$, and let $b_k$ be the number of ideals of norm $k$ which lie in the class $\mathfrak{c}$. Then,
\[\sum_{k\leq x}b_k = \frac{\kappa_K}{h_K} x + O^{*}\left(n^{10n^2}(\textup{Reg}_K)^{1/n}(1+ \log\textup{Reg}_K)^{\frac{(n-1)^2}{n}} x^{1-\frac{1}{n}}\right).\]
\end{cor}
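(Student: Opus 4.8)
The plan is to deduce the corollary from Theorem~\ref{keylemma} by the classical device of sorting integral ideals according to their ideal class. Fix a class $\c$ in the class group of $K$ and choose an integral ideal $\b$ lying in the inverse class $\c^{-1}$. Then $\a \mapsto \a\b =: (\alpha)$ sets up a bijection between the integral ideals $\a$ in the class $\c$ with $N(\a)\le x$ and the orbits under $\O^\times$ of the nonzero elements $\alpha \in \b$ with $|N_{K/\Q}(\alpha)| \le x\,N(\b)$: indeed $\a\b$ is principal as $\c\c^{-1}$ is trivial, $\alpha$ lies in $\b$ because $\a$ is integral, the norm condition transfers directly since $N((\alpha)) = N(\a)N(\b)$, and $\alpha$ is determined up to a unit by the ideal it generates. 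Counting these orbits of elements of the ideal $\b$ is precisely the problem solved by Theorem~\ref{keylemma} (with no auxiliary congruence condition imposed), applied with $x$ replaced by $x\,N(\b)$.

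For the first assertion I would sum this count over all $h_K$ classes $\c$. In the main term, the factor $N(\b)$ coming from the enlarged region $|N_{K/\Q}(\alpha)|\le x\,N(\b)$ cancels against the $N(\b)$ in the covolume of the lattice $\b \subseteq \R^n$, so each class contributes the same main term, which by the analytic class number formula $\kappa_K = 2^{r_1}(2\pi)^{r_2}\reg_K h_K/(w_K\sqrt{|\Delta_K|})$ (with $w_K$ the number of roots of unity in $K$) equals $\kappa_K x/h_K$; summing over the $h_K$ classes gives $\kappa_K x$. For the error, I would choose in each class the ideal $\b$ of least norm, so that Minkowski's convex body theorem gives $N(\b)\le (4/\pi)^{r_2}(n!/n^n)\sqrt{|\Delta_K|}$; feeding this into the error term produced by Theorem~\ref{keylemma}, multiplying by $h_K$, and using the class number formula once more to trade the remaining powers of $\sqrt{|\Delta_K|}$, $h_K$ and $\kappa_K$ for powers of $\reg_K h_K$, yields the claimed bound. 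For $x$ below the threshold where Theorem~\ref{keylemma} applies the statement is trivial, since there both $\kappa_K x$ and the stated error are $O(n^{10n^2})$ while $\sum_{k\le x}a_k$ is a small nonnegative integer.

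The second assertion is proved the same way, except that there is no summation over classes: one estimates $b_k$ for the single given class $\c$ only, so exactly one instance of the error term of Theorem~\ref{keylemma} appears and the main term is $\kappa_K x/h_K$. The reason $h_K$ is absent from this error term, so that one sees $(\reg_K)^{1/n}$ rather than $(\reg_K h_K)^{1/n}$, is that, without the sum over classes, the discriminant dependence introduced by $N(\b)$ through Minkowski's bound recombines via the class number formula to leave only the regulator.

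The main obstacle is purely the bookkeeping of the field constants. One must track precisely how the covolume and the successive minima of the lattice $\b$, together with the factor $N(\b)$, enter the error term of Theorem~\ref{keylemma}; verify that the $N(\b)$ arising from the region genuinely cancels the $N(\b)$ in the covolume, so that no spurious power of $|\Delta_K|$ survives in the main term; and then check that Minkowski's bound combined with the analytic class number formula repackages the residual dependence on $|\Delta_K|$, $h_K$ and $\kappa_K$ exactly into the stated powers of $\reg_K h_K$, resp.\ $\reg_K$, with the exponents $1/n$ and $(n-1)^2/n$ and the constant $n^{10n^2}$. No analytic input is needed beyond Theorem~\ref{keylemma}, Minkowski's theorem, and the class number formula.
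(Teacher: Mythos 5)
Your reduction to counting principal ideals inside a fixed $\b\in\c^{-1}$, the passage to elements modulo units, and the treatment of the main term (cancellation of $N(\b)$, analytic class number formula) all coincide with the paper's proof. The gap is in your account of the error term, which rests on a misreading of what Theorem \ref{keylemma} delivers. Its error term is $n^{3n^2/2}e^{4n^2}\,\frak{N}(\frC^{-1})\,t^{n-1}/N(\a\ff)^{(n-1)/n}$ (plus $m_1\cdots m_r$, which is absent when $\ff=\O_K$); applying it with $\a=\b$, $\ff=\O_K$ and $t^n=xN(\b)$ makes the factor $N(\b)$ cancel identically, so no power of $N(\b)$ --- and hence no $\sqrt{|\Delta_K|}$, $h_K$ or $\kappa_K$ --- survives in the error. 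Minkowski's bound on $N(\b)$ and the class number formula therefore have nothing to act on, and your proposed mechanism cannot produce the factor $(\reg_Kh_K)^{1/n}(1+\log\reg_Kh_K)^{(n-1)^2/n}$. Your explanation of why the single-class error carries $\reg_K$ rather than $\reg_Kh_K$ is likewise not the right one.

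The field constants actually come from the quantity $\frak{N}(\frC^{-1})=\sum_{i\le m_1\cdots m_r}N(\b_i)^{-(n-1)/n}$, a sum over the $m_1\cdots m_r$ integral ideals of smallest norm in the relevant class. Bounding this needs two ingredients your proposal does not mention: Lemma \ref{Regvsproduct}, which for a well-chosen system of fundamental units gives $m_1\cdots m_r\le(2rn)^{2r}\reg_K$, and Lemma \ref{boundfinally}, which via $a_m\le d_n(m)$ and Bordell\`es' explicit divisor-sum bound gives $\sum_{i\le y}N(\b_i)^{-(n-1)/n}\le 6ny^{1/n}(\log y)^{(n-1)^2/n}$. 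Taking $y=m_1\cdots m_r$ yields the single-class error with $\reg_K^{1/n}$; summing $\frak{N}(\c)$ over all $h_K$ classes amounts to taking $y=h_Km_1\cdots m_r$, which is the true reason $h_K$ enters the first error term but not the second. Without these lemmas, the trivial bound $\frak{N}(\frC^{-1})\le m_1\cdots m_r$ only gives an error \emph{linear} in $\reg_Kh_K$, i.e.\ the weaker Widmer/Gao-type estimate the paper is explicitly improving upon. (A minor further point: Theorem \ref{keylemma} is valid for all $t\ge0$, so no small-$x$ case split is needed.)
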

\begin{rem}One may again rephrase the constant Reg$_Kh_K$ in terms of the discriminant $\Delta_K$, using the analytic class formula and the bound by Louboutin \cite{Loubas} $\kappa_K\leq (\frac{e\log|\Delta_K|}{2(n-1)})^{n-1}$, to bound the error term by $\ll_{n,\eps} |\Delta_K|^{\f{1}{2n} + \eps}x^{1-\f{1}{n}}$. Thus one can get a meaningful estimate for $x$ as small as $\gg_{n,\eps} |\Delta_K|^{1/2+\eps}$.
\end{rem}

The quality of the error term is to be compared with what zeta function methods would yield. One can derive roughly the same bounds by an application of Perron's formula with the convexity bound $\zeta_K(\frac{1}{2}+it)\ll (|\Delta_K|(1+|t|)^{n})^{1/4+\eps}$. Hulse and Murty \cite{HulseMurty} used a variant of the convexity argument to give as an error term $O_{n,\eps}(|\Delta_K|^{\eps}x^{1-\eps/2})$, which is better in terms of the discriminant, but has unspecified constants in the degree $n$, and is suboptimal in terms of $x$. 
For general number fields $K$, the best result in terms of $x$ is due to Lao \cite{Lao}, who uses the subconvexity bound $\zeta_K(\frac{1}{2}+it)\ll_K (1+|t|)^{n/6+\eps}$(which follows from Kaufmans work \cite{Kaufman}) to produce an estimate with errorterm $O_K(x^{1-\frac{3}{n+6}+\eps})$. Weaker improvements on the exponent of $x$ which are not explicit on the constants can be found in \cite{Bordel2} and \cite{Nowak}. 
This corollary generalises the result for quadratic fields proven by Schmidt \cite{Schmidt}, namely that \[\sum_{k\leq x}a_k = \kappa_K x + O\left((\textup{Reg}_Kh_K)^{1/2}(\max(1,\log\textup{Reg}_Kh_K))^{1/2} x^{1/2}\right),\] with some absolute implied constant. Widmer \cite{Widmer} proved a comparable result for number fields of fixed degree, working however in a more general setting, counting points of bounded height in projective space over $K$. His work would imply an error term $O_n(\textup{Reg}_Kh_K x^{1-\frac{1}{n}})$ with constant depending only on the degree. That same error term would be implied in the PhD thesis of Gao \cite{Gao}. 

\section{Notation}
Let $K$ be a number field of discriminant $\Delta_K$ and of degree $n$, with $r_1$ real embeddings and $r_2$ complex embeddings, so that $n=r_1+2r_2$. Let $\sigma_1,\dots,\sigma_{r_1}$ be the real embeddings, and let $(\sigma_{r_1+1},\sigma_{r_1+r_2+1}),\dots,(\sigma_{r_1+r_2},\sigma_{r_1+2r_2})$ be the pairs of complex embeddings. Choose a system of $r=r_1+r_2-1$ fundamental units in the ring of integers $\OO_K$, denoted by  $\eps_1,\dots, \eps_r$. Dependent on this choice for a system of fundamental units, we consider the group of non-torsion units $U_K=\langle \eps_1,\dots \eps_r,\rangle$, and we define the positive constants\begin{equation}\label{defmax}m_j = \lceil\max_i|\sigma_i(\eps_j)|\rceil.\end{equation}

Let $\textup{Reg}_K$ be the regulator, $h_K$ the class number, and $\kappa_K$ the residue of the Dedekind-zeta function $\zeta_K(s)$ at $s=1$.

We will reserve the dummy variables $i$ and $j$ for specific uses, to make the notation more transparent. The embeddings will always be indexed by $i$, whereas the units will be indexed by $j$.

Recall the Minkowski embedding \begin{align*}\phi: K&\rightarrow \R^{r_1}\times \CC^{r_1}\\
\alpha&\mapsto (\sigma_1(\alpha), \dots,\sigma_{r_1+r_2}(\alpha)).\end{align*}
We will identify the image space $\R^{r_1}\times \CC^{r_2}$, which is called Minkowski space, with $\R^n$, by taking real and imaginary parts in the last $r_2$ coordinates. We will denote by $\|.\|$ the euclidean norm, where $|.|$ is the usual absolute value on $\R$ or $\CC$. Whenever we write a product of two vectors, componentwise multiplication is what is meant. Define $e_i=1$ for $i=1,\dots,r_1$ and $e_i=2$ for $i=r_1+1,\dots,r_1+r_2$, and define for each $x\in \R^{r_1}\times \CC^{r_2}$, the norm function $N(x)=\prod_{i=1}^{r_1+r_2} |x_i|^{e_i}$, which corresponds to the absolute value of the norm on $K$.

Given a choice of signs $\vec{\eta}=(\eta_1,\dots,\eta_{r_1})\in\{\pm 1\}^{r_1}$, define the open orthant $\R^{r_1}_{\vec{\eta}}=\{(x_1,\dots,x_{r_1})\in\R^{r_1}\mid \sgn(x_i)=\eta_i\}$.

\section{Counting lattice points}
We will count integral elements by embedding them in the Minkowski space as a (translate of a) lattice. We recall some definitions. Let $S$ be a subset of $\R^n$, where we assume that $n\geq 2$. We say that $S$ is of Lipschitz class $\cal{L}(n,M,L)$ if there are $M$ maps $\phi_1,\dots,\phi_M : [0,1]^{n-1}\longrightarrow \R^{n}$ such that $S$ is covered by the images of the maps $\phi_i$, and the maps satisfy the Lipschitz condition\begin{equation}\label{eq:lipscheq}\lVert\phi_i(x)-\phi_i(y)\rVert\leq L \lVert x-y\rVert \textup{ for } x,y\in [0,1]^{n-1}, i=1,\dots,M.\end{equation} 
We define the successive minima $\lambda_i, i=1,\dots,n$ of a lattice $\Gamma$ as \[\lambda_i = \inf\{\lambda\in\R \mid B(0,\lambda)\cap \Gamma \textup{ contains $i$ linearly independent vectors}\}.\]
We define the orthogonality defect $\Omega$ of a lattice $\Gamma$ as  \[\Omega = \inf_{(v_1, \dots , v_n)}\f{|v_1|\dots|v_n|}{\det \Gamma},\]
where the infimum runs over all bases $(v_1,\dots , v_n)$ of $\Gamma$.

In order to count lattice points, we will use the following theorem by \mbox{Widmer} \cite[Theorem 5.4]{Widmer}. The virtue of this theorem is that it is optimal in the successive minima and explicit in terms of the Lipschitz constant $L$ and the dimension $n$. 
\bl\label{Widmer}(Widmer) Let $\Gamma$ be a lattice in $\R^n$ with successive minima $\lambda_1,\dots,\lambda_n$ and orthogonality defect $\Omega$. Let $S$ be a bounded set in $\R^n$ such that the boundary $\partial S$ is of Lipschitz class $\cal{L}(n,M,L)$. Then $S$ is measurable, and moreover,
\[\left| |S\cap \Gamma| -\f{\textup{Vol}(S)}{\textup{det }\Gamma}\right|\leq M (2\sqrt{n}\Omega+4)^n \max_{0\leq i < n} \f{L^i}{\lambda_1\dots \lambda_i},\]
or, since unconditionally we have $\displaystyle\Omega\leq \f{n^{\frac{3}{2}n}}{(2\pi)^{\frac{n}{2}}}$,
\[\left| |S\cap \Gamma| -\f{\textup{Vol}(S)}{\textup{det }\Gamma}\right|\leq M n^{3n^2/2}\max_{0\leq i < n} \f{L^i}{\lambda_1\dots \lambda_i},\]
\el
\begin{rem} We point out that the same result holds for translates of lattices. Indeed, $|S\cap (\Gamma + v)| = |(S-v)\cap \Gamma|$, and $S-v$ is of the same Lipschitz class as $S$, and obviously has the same volume.


\end{rem}

\subsection{Fundamental domain}
We now recall the fundamental domain under the action of multiplication by the fundamental units. For full details we refer the reader to \cite{Lang}.
Let 
\begin{align*} f:\R^{r_1}\times \CC^{r_2} &\rightarrow \R^{r+1} \\
(x_1,\dots,x_{r_1+r_2}) &\mapsto (\xi,\xi_1,\dots,\xi_r),
\end{align*}
where $\xi$ and the $\xi_j$ are defined by \begin{equation}\label{eq:fundomeq} \log(|x_i|) = \f{\log(\xi)}{n} + \sum_{j=1}^{r} \xi_j \log(|\sigma_i(\eps_j)|),\quad\quad\quad i=1,\dots {r_1+r_2},
\end{equation}
or in other words, \begin{equation}\label{eq:fundomexpeq} |x_i|=\xi^{1/n}\prod_{j=1}^{r}|\sigma_i(\eps_j)|^{\xi_j},\quad\quad\quad i=1,\dots {r_1+r_2}.
\end{equation}
Note that since the units have norm one, it follows that $\xi=|N(x)|$. 

The fundamental domain is then defined as $\F=f^{-1}\left(\R_{+}\times [0, 1)^{r}\right)$. We remark that in what follows, we will always assume that $K\neq \Q$, since the results are trivial in this case. In the case that $K$ is an imaginary quadratic field, $r=0$, and in order for the proofs to remain valid, one should interpret the ``empty product" $\prod_{j=1}^r m_j$, as well as the ``empty determinant" Reg$_K$, as having the value $1$.

We summarise a number of well-known facts in the following proposition
\begin{prop} Consider the action of the non-torsion unit group $U_K$ on Minkowski space $\R^{r_1}\times\CC^{r_2}$ by multiplication. The set $\F$ is a fundamental domain for this action. It is a cone, and defining \[\F(a_1,b_1,\dots,a_r,b_r;X) = f^{-1}\left((0,X]\times \prod_{j=1}^r[a_j, b_j)\right),\]
we have that \begin{align}\vol(\F(a_1,b_1,\dots,a_r,b_r;X))&=X\prod_{j=1}^r(b_j-a_j)\vol(\F(0,1,\dots,0,1;1))\nonumber\\ &= X\prod_{j=1}^r(b_j-a_j)2^{r_1}\pi^{r_2}\reg_K.\label{eq:volumeeq}\end{align}
\end{prop}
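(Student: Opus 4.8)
The plan is to push everything through the map $f$, which turns the multiplicative unit action into an additive lattice action and turns the norm $N(x)=\xi$ into one of the new coordinates. \textbf{Fundamental domain and cone.} First I would record the effect of the relevant operations in the $f$-coordinates. Multiplying $x$ by the fundamental unit $\eps_k$ multiplies each $|x_i|$ by $|\sigma_i(\eps_k)|$, hence adds $\log|\sigma_i(\eps_k)|$ to $\log|x_i|$; comparing with \eqref{eq:fundomeq} and using $N(\eps_k)=1$ (so $\xi=|N(x)|$ is unchanged) this is exactly the translation $(\xi,\xi_1,\dots,\xi_r)\mapsto(\xi,\xi_1,\dots,\xi_k+1,\dots,\xi_r)$. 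Thus $f$ intertwines the $U_K$-action on the full-measure locus $\{x:N(x)\neq 0\}$ with the $\Z^r$-translation action on $\R_+\times\R^r$ in the last $r$ coordinates. Since every coset of $\Z^r$ meets $[0,1)^r$ exactly once, and since $f$ is injective on each single $U_K$-orbit (two orbit representatives with equal $f$-image would differ by a unit with vanishing exponent vector in the free group $\langle\eps_1,\dots,\eps_r\rangle$, hence be equal), each orbit meets $\F$ in exactly one point; the locus where some $|x_i|=0$ is negligible. This is the content of the reference to \cite{Lang}. Likewise, scaling $x$ by $t>0$ adds $\log t=\log(t^n)/n$ to each $\log|x_i|$, which by \eqref{eq:fundomeq} is absorbed into the $\log(\xi)/n$ term and sends $(\xi,\xi_1,\dots,\xi_r)\mapsto(t^n\xi,\xi_1,\dots,\xi_r)$, leaving the $\xi_j$ fixed; hence $x\in\F\Rightarrow tx\in\F$, i.e.\ $\F$ is a cone.

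\textbf{Volume.} This is the computational core. I would introduce generalized polar coordinates on Minkowski space: $x_i=\eta_i\rho_i$ with $\eta_i\in\{\pm1\}$, $\rho_i>0$ for $i\leq r_1$, and $x_i=\rho_i\exp(\sqrt{-1}\,\theta_i)$ with $\rho_i>0$, $\theta_i\in[0,2\pi)$ for $r_1<i\leq r_1+r_2$; Lebesgue measure becomes $\prod_{i\leq r_1}d\rho_i\cdot\prod_{i>r_1}(\rho_i\,d\rho_i\,d\theta_i)$, and $\F(a_1,b_1,\dots,a_r,b_r;X)$ is a union over the $2^{r_1}$ sign patterns of sets that are independent of the angles. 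Integrating out the $2^{r_1}$ signs and the angles ($\int_{[0,2\pi)^{r_2}}=(2\pi)^{r_2}$) leaves
\[\vol\bigl(\F(a_1,b_1,\dots,a_r,b_r;X)\bigr)=2^{r_1}(2\pi)^{r_2}\int_R\Bigl(\prod_{i=r_1+1}^{r_1+r_2}\rho_i\Bigr)\,d\rho_1\cdots d\rho_{r_1+r_2},\]
where $R\subseteq\R_{>0}^{r_1+r_2}$ is the image of the domain under $x\mapsto(\rho_1,\dots,\rho_{r_1+r_2})$. I then change variables from $(\rho_1,\dots,\rho_{r_1+r_2})$ to $(\xi,\xi_1,\dots,\xi_r)$ via \eqref{eq:fundomexpeq}; on $R$ these range over $(0,X]\times\prod_j[a_j,b_j)$. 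From $\rho_i=\xi^{1/n}\prod_j|\sigma_i(\eps_j)|^{\xi_j}$ one gets $\partial\rho_i/\partial\xi=\rho_i/(n\xi)$ and $\partial\rho_i/\partial\xi_j=\rho_i\log|\sigma_i(\eps_j)|$, so the Jacobian determinant equals $\bigl(\prod_i\rho_i\bigr)\cdot\tfrac1{n\xi}\det\bigl[\mathbf{1}\mid(\log|\sigma_i(\eps_j)|)_{i,j}\bigr]$ with the bracketed $(r_1+r_2)\times(r_1+r_2)$ matrix having first column all ones. Scaling row $i$ by $e_i$ (total factor $\prod_i e_i=2^{r_2}$) turns this into the classical matrix $\bigl[\mathbf{e}\mid(e_i\log|\sigma_i(\eps_j)|)\bigr]$ whose determinant is $\pm n\reg_K$, so the Jacobian has absolute value $\bigl(\prod_i\rho_i\bigr)\reg_K/(2^{r_2}\xi)$. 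Since $\bigl(\prod_{i>r_1}\rho_i\bigr)\bigl(\prod_i\rho_i\bigr)=\prod_i\rho_i^{e_i}=N(x)=\xi$, the integrand times the Jacobian collapses to the constant $\reg_K/2^{r_2}$, giving
\[\vol\bigl(\F(a_1,b_1,\dots,a_r,b_r;X)\bigr)=2^{r_1}(2\pi)^{r_2}\frac{\reg_K}{2^{r_2}}\int_0^X d\xi\prod_{j=1}^r\int_{a_j}^{b_j}d\xi_j=X\prod_{j=1}^r(b_j-a_j)\,2^{r_1}\pi^{r_2}\reg_K,\]
which is \eqref{eq:volumeeq}; the first equality stated there is just the case $a_j=0$, $b_j=1$, $X=1$.

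\textbf{Main obstacle.} Nothing here is deep, but the volume computation demands careful bookkeeping: one must correctly combine the $2^{r_1}$ sign choices, the polar Jacobians $\rho_i$ at the complex places, the weights $e_i$, and—most delicately—the precise normalization of the determinant $\det\bigl[\mathbf{e}\mid(e_i\log|\sigma_i(\eps_j)|)\bigr]=\pm n\reg_K$ (as opposed to $\pm\reg_K$ or $\pm\sqrt n\,\reg_K$), which one should verify directly, e.g.\ by checking the real-quadratic and imaginary-quadratic cases against the empty-determinant convention $\reg_K=1$. The fundamental-domain and cone assertions, being standard, I would dispatch quickly with the reduction above and a citation to \cite{Lang}.
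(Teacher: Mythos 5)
Your proposal is correct, and in fact supplies more than the paper does: the paper states this proposition as a summary of well-known facts and defers entirely to the reference \cite{Lang}, giving no proof. Your argument is the standard one — intertwining the unit action with $\Z^r$-translations via $f$, polar coordinates at each place, and the change of variables whose Jacobian produces $\reg_K$ — and the delicate normalizations all check out: $\det\bigl[\mathbf{e}\mid(e_i\log|\sigma_i(\eps_j)|)\bigr]=\pm n\reg_K$ (add all rows to the first and expand), the factor $\prod_i e_i=2^{r_2}$, and the collapse $\bigl(\prod_{i>r_1}\rho_i\bigr)\bigl(\prod_i\rho_i\bigr)=\prod_i\rho_i^{e_i}=\xi$ cancelling against the $1/\xi$ in the Jacobian, yielding exactly $2^{r_1}\pi^{r_2}\reg_K\,X\prod_j(b_j-a_j)$.
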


We will denote some refinements of the fundamental domain in the following way.
\begin{align*}
&\F_\half(a_1,b_1,\dots,a_r, b_r; X) = f^{-1}\left((\half X,X]\times \prod_{j=1}^{r}[a_j, b_j)\right),\\ 
&\F_{\vec{\eta}}(a_1,b_1,\dots,a_r, b_r; X)=\F(a_1,b_1,\dots,a_r, b_r; X)\cap \left(\R^{r_1}_{\vec{\eta}}\times\CC^{r_2}\right),\\
&\F_{\half,\vec{\eta}}(a_1,b_1,\dots,a_r, b_r; X)=\F_\half(a_1,b_1,\dots,a_r, b_r; X)\cap \left(\R^{r_1}_{\vec{\eta}}\times\CC^{r_2}\right).\\
\end{align*}
Finally, $\F(X) = \F(0,1,\dots,0, 1; X)$, and similarly for when the subscripts $\half$ or $\vec{\eta}$ are present.

\subsection{A Partition of the fundamental domain.}

We will be counting points in the fundamental domain $\F$ using Lemma \ref{Widmer}, with an explicit error term in terms of the Lipschitz constant. In order to optimize this process, we will instead count points in a {\em smaller} variant of the fundamental domain with a more regular shape. This trick was first used by Schmidt in \cite{Schmidt} in the context of real quadratic fields, but has been applied in more general settings by Widmer \cite{Widmer}, Gao \cite{Gao}. The reason why we go through this partition step is because the Lipschitz constant of $\F_{\half, \vec{\eta}}(0,\frac{1}{m_1},\dots,0,\frac{1}{m_r};X)$ is essentially independent of $K$, whereas the Lipschitz constant of $\F_{\half, \vec{\eta}}(X)$ is exponential in the $m_j$. Schmidt's partition trick will thus immediately reduce the dependence on the size of the logarithms of the fundamental units from an exponential dependence to a polynomial dependence. 

\bl\label{schmidtpartition}(Schmidt's partition trick) Let $\Gamma$ be a set of points in $\R^{r_1}\times \CC^{r_2}$. Let $\vec{k}\in \prod_{j=1}^{r}([0,m_j)\cap\Z)$ and denote $\beta_{\vec{k}} = \big(\prod_{j=1}^{r}|\sigma_i(\eps_j)|^{-k_j/m_j}\big)\in{\R^{r_1}\times\CC^{r_2}}$, so that $N(\beta_{\vec{k}})=1$. Then 
\[|\Gamma \cap \F_{\half, \vec{\eta}}(X)|=\sum_{\vec{k}}|(\Gamma \cdot \beta_{\vec{k}}) \cap \F_{\half, \vec{\eta}}(0,\frac{1}{m_1},\dots,0,\frac{1}{m_r};X)|,\]
where the sum runs over $\vec{k}\in \prod_{j=1}^{r}([0,m_j)\cap\Z)$.
\el
\bp 
We split up the unit hypercube $[0,1)^{r}$ as a union of sets $\prod_{j=1}^{r}[\frac{k_j}{m_j}, \frac{k_j+1}{m_j})$, for $0\leq k_j\leq m_j-1, j=1,\dots, r$, so that 
\[ |\Gamma\cap\F_{\half, \vec{\eta}}(X)| = \sum_{\substack{(k_1,\dots,k_r)\\0\leq k_j\leq m_j-1}}|\Gamma\cap\F_{\half, \vec{\eta}}(\frac{k_1}{m_1}, \frac{k_1+1}{m_1}, \dots, \frac{k_{r}}{m_{r}}, \frac{k_{r}+1}{m_{r}}; X)|. \]
In each summand we multiply the elements in the intersection componentwise with \[\beta_{\vec{k}}=(\prod_{j=1}^r |\sigma_1(\eps_j)|^{-k_j/m_j},\dots,\prod_{j=1}^r |\sigma_{r_1+r_2}(\eps_j)|^{-k_j/m_j}),\] a norm one element since $\prod_{i=1}^{r_1+r_2}|\sigma_i(\eps_j)|^{e_i} = 1$. This gives a concrete bijection between the sets
\[\Gamma \cap \F_{\half, \vec{\eta}}(\frac{k_1}{m_1}, \frac{k_1+1}{m_1}, \dots, \frac{k_r}{m_r}, \frac{k_r+1}{m_r} ; X)\] and \[(\Gamma \cdot \beta_{\vec{k}}) \cap \F_{\half, \vec{\eta}}(0,\frac{1}{m_1},\dots,0,\frac{1}{m_r};X). \]
\end{proof}

The following lemma will be practical in computing the Lipschitz constant of the fundamental domain.
\bl Let $f:\R^m\rightarrow \R$ be a function such that $f(y_1,y_2,\dots,y_m)=c\prod_{j=1}^m g_j(y_j)$, where the $g_j:\R\rightarrow \R$ are functions that satisfy $|g_j(y_j) - g_j(y_j')|\leq K_j|y_j-y_j'|$ and $|g_j(y_j)|\leq M_j$. Then we have that \begin{equation}\label{eq:lemma1}\left|f(y_1,y_2,\dots,y_m)-f(y_1',y_2',\dots,y_m')\right|\leq c\sum_{j=1}^m K_j\prod_{k\neq j}M_k \lVert y-y'\rVert.\end{equation}
Let $h:\R^m\rightarrow \R^m, h=(h_1,\dots,h_m)$, where the $h_i:\R^m\rightarrow \R$ are functions that satisfy $|h_i(y)-h_i(y')|\leq L_i \lVert y-y'\rVert$. Then we have that \begin{equation}\label{eq:lemma2} \lVert h(y)-h(y')\rVert \leq \sqrt{m}\max_i L_i \lVert y-y'\rVert\end{equation}
\el
\bp
For the first inequality, let $y^i=(y_1',\dots,y_i',y_{i+1}\dots ,y_m)$. The result follows by writing the left hand side as $|f(y)-f(y^1)+f(y^1)-f(y^2)+\dots - f(y^m)|$, and the fact that $|y_i-y_i'|\leq \lVert y-y'\rVert$. The second inequality is obvious.
\end{proof}

\bl
The set $\partial\F_{\half, \vec{\eta}}(0,\frac{1}{m_1},\dots,0,\frac{1}{m_r};t^n)$ is of Lipschitz-class $\cal{L}(n,{2r+2},Lt),$ where \[L = (r+2\pi)^{3/2} e^r.\]
\el
\bp 
We note that the Lipschitz constant of a blown-up set $tR$ equals $tL$, where $L$ is the Lipschitz constant of $R$, hence we can restrict attention to the case $t=1$.

Note that for $(x_1,\dots,x_{r_1+r_2})\in\F_\half(0,\frac{1}{m_1},\dots,0,\frac{1}{m_r};1)$, by \eqref{eq:fundomeq}, $\log(|x_i|)$ is bounded from below by some constant depending only on $K$. Hence the $|x_i|$ are bounded away from zero, so that intersecting $\F_\half(0,\frac{1}{m_1},\dots,0,\frac{1}{m_r};1)$ with $\R^{r_1}_{\vec{\eta}}\times\CC^{r_2}$ does not create any additional boundary, that is,
\[\partial \cal{F}_{\half,\vec{\eta}}(0,\frac{1}{m_1},\dots,0,\frac{1}{m_r};1) = \partial\cal{F}_{\half}(0,\frac{1}{m_1},\dots,0,\frac{1}{m_r};1)\cap \left(\R^{r_1}_{\vec{\eta}}\times\CC^{r_2}\right).\]

In order to cover $\partial \cal{F}_{\half,\vec{\eta}}(0,\frac{1}{m_1},\dots,0,\frac{1}{m_r};1)$ with the images of maps from $[0,1]^{n-1}$, we consider the injective map \begin{align*}\tilde{f}: \R^{r_1}_{\vec{\eta}}\times\CC^{r_2} \quad \quad &\rightarrow \R^{n}\\ (x_1,\dots,x_{r_1+r_2})&\mapsto (f(x_1,\dots,x_{r_1+r_2}), \arg(x_{r_2+1}),\dots,\arg(x_{r_1+r_2})).\end{align*} 
Then 
\begin{align*}\partial\cal{F}_{\half}(0,\frac{1}{m_1},\dots,0,\frac{1}{m_r};1)\,\cap &\left(\R^{r_1}_{\vec{\eta}}\times\CC^{r_2}\right) \\
&= 
\partial f^{-1}\Big((\half ,1]\times \prod_{j=1}^r [0, \frac{1}{m_j})\Big)\cap \left(\R^{r_1}_{\vec{\eta}}\times\CC^{r_2}\right)\\
&= \partial \tilde{f}^{-1}\Big((\half ,1]\times \prod_{j=1}^r [0, \frac{1}{m_j})\times [0,2\pi)^{r_2}\Big),
\end{align*}

and since $\tilde{f}$ is continuous, we may conclude that \[\partial \cal{F}_{\half,\vec{\eta}}(0,\frac{1}{m_1},\dots,0,\frac{1}{m_r};1)=\tilde{f}^{-1}\partial\Big((\half ,1]\times \prod_{j=1}^r [0, \frac{1}{m_j})\times [0,2\pi)^{r_2}\Big),\]
so that $\partial \cal{F}_{\half,\vec{\eta}}(0,\frac{1}{m_1},\dots,0,\frac{1}{m_r};1)$ is covered by the images of the $2r+2$ maps $\tilde{f}^{-1}$, where one of the first $r+1$ coordinates is fixed to be either $\half$ or 1, or $0$ or $\frac{1}{m_j}$. Let us begin with the map fixing the first coordinate to be $\half$:
\begin{align*}\phi_0: (y_1,\dots,y_{n-1}) \longmapsto &\tilde{f}^{-1}(\half, \frac{y_1}{m_1},\dots, \frac{y_r}{m_r}, 2\pi y_{r+1}, \dots, 2\pi y_{n-1})\\
&=(x_1,\dots x_{r_1+ r_2}),
\end{align*}
where \[\begin{cases}\displaystyle x_i = \eta_i (\half)^{1/n}\prod_{j=1}^{r}|\sigma_i(\eps_j)|^{\frac{y_j}{m_j}}& \textup{if $i\leq r_1$},\\\displaystyle x_i = e^{2\pi i y_i}(\half)^{1/n}\prod_{j=1}^{r}|\sigma_i(\eps_j)|^{\frac{y_j}{m_j}}& \textup{if $r_1< i\leq r_1+r_2$}.
 \end{cases}\]

We will bound the Lipschitz constant of this map, using $\eqref{eq:lemma2}$, by $\sqrt{r_1+r_2} \max_i L_i$, where $L_i$ is such that $|x_i(y) - x_i(y')|\leq L_i \|y-y'\|$. To bound $L_i$, we will use $\eqref{eq:lemma1}$. Note that \[\left||\sigma_i(\eps_j)|^{y_j/m_j} - |\sigma_i(\eps_j)|^{y_j'/m_j} \right|\leq e |y_j-y_j'|, \quad \quad \textup{and}\quad \quad |\sigma_i(\eps_j)|^{y_j/m_j}\leq e,\] 
from which it follows that in the case $i\leq r_1$ we may take $K_j=M_j=e$ for $j\leq r$, $K_j=0, M_j=1$ for $j>r$. Hence, $L_i \leq (\half)^{1/n}re^r$. However, in the case $r_1< i\leq r_1+r_2$, we have $K_i=2\pi, M_i=1$, and all other $K_j, M_j$ are as before. In this case $L_i\leq (\half)^{1/n}(re+2\pi)e^{r-1}$.
Thus, the Lipschitz constant of this map $\phi_0$ is bounded by $(\half)^{1/n}\sqrt{r_1+r_2}(r+2\pi) e^r$. Analogously, the map which fixed the first coordinate to be $1$ instead of $\half$ has a Lipschitz constant which is bounded by $\sqrt{r_1+r_2}(r+2\pi) e^r$.

We use the same strategy to bound the Lipschitz constant of other maps $\phi_k$ which fix the $k$'th coefficient to be $a\in\{0,\frac{1}{m_k}\}$:

\begin{align*}\phi_k : (y_0,\dots,\widehat{y_k},\dots ,y_{n-1}) \longmapsto &\tilde{f}(\frac{1+y_0}{2}, \frac{y_1}{m_1},\dots, a, \dots \frac{y_r}{m_r}, 2\pi y_{r+1}, \dots, 2\pi y_{n-1})\\
&=(x_1,\dots , x_{r_1+ r_2}),
\end{align*}
where \[\begin{cases}\displaystyle x_i = \eta_i \,\left(\frac{1+y_0}{2}\right)^{1/n}\,\prod_{\substack{j=1\\j\neq k}}^{r}|\sigma_i(\eps_j)|^{\frac{y_j}{m_j}} |\sigma_i(\eps_k)|^{a}& \textup{if $i\leq r_1$},\\\displaystyle x_i = e^{2\pi i y_i}\left(\frac{1+y_0}{2}\right)^{1/n}\prod_{\substack{j=1\\j\neq k}}^{r}|\sigma_i(\eps_j)|^{\frac{y_j}{m_j}}|\sigma_i(\eps_k)|^{a}& \textup{if $r_1< i\leq r_1+r_2$}.
 \end{cases}\]

We will bound the Lipschitz constant of this map, using $\eqref{eq:lemma2}$, by $\sqrt{r_1+r_2} \max_i L_i$, where $L_i$ is such that $|x_i(y) - x_i(y')|\leq L_i \|y-y'\|$. To bound $L_i$, we will use $\eqref{eq:lemma1}$. As with $\phi_0$, in the case $i\leq r_1$ we may take $K_j=M_j=e$ for $1\leq j\leq r$, $K_j=0, M_j=1$ for $j>r$. Since \[\left|\left(\frac{1+y_0}{2}\right)^{1/n} - \left(\frac{1+y_0'}{2}\right)^{1/n}\right|\leq \f{1}{2^{1/n}n}|y_0-y_0'|,\] we may take $K_0=1/n$ and $M_0=1$. Hence, $L_i \leq  |\sigma_i(\eps_k)|^{a}(r-1+1/n)e^{r-1}\leq re^r$. However, if $r_1< i\leq r_1+r_2$, we note that $K_i=2\pi, M_i=1$, and all other $K_j, M_j$ are as before. In this case $L_i\leq (r+2\pi)e^{r}$.
Thus, the Lipschitz constant of this map $\phi_k$ is bounded by $\sqrt{r_1+r_2}(r+2\pi) e^r$.
\end{proof}

We will need the following three lemmata.
\bl\label{inverseamgm} Let $x=(x_1,\dots ,x_{r_1+r_2}) \in \F(0, \frac{1}{m_1}, \dots, 0, \frac{1}{m_r}; X)$. Then \[ \|x\| \leq \sqrt{r+1}\,e^{r}|N(x)|^{1/n} \]
\el
\bp 
This follows immediately from \eqref{eq:fundomexpeq}, the fact that $\log|\sigma_i(\eps_j)|\leq m_j$ and the inequality between the euclidean norm and the max-norm.
\end{proof}
We now fix our choice of fundamental units such that the product of the constants $m_j$ approximates the regulator of the number field. This is possible by the following lemma.
\bl\label{Regvsproduct}
There is a choice of fundamental units $\eps_1,\dots,\eps_r$ such that \[\frac{1}{2^{r+1}(r+1)^{r/2}}\textup{Reg}_K\leq \prod_{j=1}^{r}m_j \leq (2rn)^{2r}\textup{Reg}_K.\]
\el
\bp 
Consider the logarithmic unit lattice $\Lambda=\l(\phi(U_K))\in\R^{r+1}$ with $\det\Lambda = \sqrt{r+1}\textup{Reg}_K$, where $\l(x_1,\dots,x_{r_1+r_2})=(e_1\log|x_1|,\dots,e_{r_1+r_2}\log|x_{r_1+r_2}|)$. Since units have norm one, this lattice lies in the $r$-dimensional hyperplane $x_1+\dots+x_{r_1+r_2}=0$.
The unconditional bound on the orthogonality defect implies that we can find a basis $v_1,\dots,v_r$ of $\Lambda$, such that \begin{equation}\label{eq:latticebasiseq}1\leq \frac{\| v_1\|\dots\| v_r\|}{\det\Lambda}\leq \f{r^{\frac{3}{2}r}}{(2\pi)^{\frac{r}{2}}}.\end{equation} Now choose $\eps_j$ such that $\ell(\phi(\eps_j))=v_j$. Since the $v_j$ form a basis, the $\eps_j$ form a set of fundamental units. 
For the lower bound, note that \[m_j=\lceil \max_i(\log|\sigma_i(\eps_j)|)  \rceil \geq \frac{1}{2} \max_i(e_i\log|\sigma_i(\eps_j)|) \geq \frac{1}{2\sqrt{r+1}} \|v_j\|,\]
and combine this with \eqref{eq:latticebasiseq}. For the upper bound, the case $n=2$ can be checked in an elementary way. For $n\geq 3$, note that by a theorem of Dobrowolski \cite{Dobro}, we have that $\max_i |\sigma_i(\eps_j)| \geq 1+\frac{\log n}{6n^2}$, and hence $\max_i \log|\sigma_i(\eps_j)| \geq \frac{1}{7n^2}$. This implies that \[m_j\leq 7n^2\max_i(e_i\log|\sigma_i(\eps_j)|) \leq 7n^2 \|v_j\|,\]
which gives the upper bound when combined with \eqref{eq:latticebasiseq}.
\end{proof}

\bl\label{integraleval} \[\int_{\R^r} \frac{dx_1\dots dx_r}{\max_{1\leq i\leq r+1}\big(\prod_{j=1}^{r}|\sigma_i(\eps_j)|^{x_j/m_j}|\sigma_i(\alpha)|\big)^{n-1}} = \frac{1}{|N(\alpha)|^{\frac{n-1}{n}}} \frac{m_1\dots m_r}{\textup{Reg}_K} \left(\frac{n}{n-1}\right)^r\]
\el
\bp
We start with the coordinate transformation $x_j \mapsto x_j/m_j$. Then, introducing the coordinate transformation $ y_i = \sum_{j=1}^r \log |\sigma_i(\eps_j)| x_j$, $i=1,\dots , r$, with determinant $\prod_{i=1}^r e_i \frac{1}{\textup{Reg}_K}$, the integral  equals \begin{align*}\prod_{i=1}^r e_i&\frac{m_1\dots m_r}{\textup{Reg}_K}\int_{\R^r} \frac{dy_1\dots dy_r}{\max(e^{y_1}|\sigma_1(\alpha)|,\dots, e^{y_r}|\sigma_r(\alpha)|, e^{-Y}|\sigma_{r+1}(\alpha)|)^{n-1}},\end{align*}
where $Y=\frac{1}{e_{r+1}}\sum_{j=1}^{r}e_jy_j$. We now apply the translation $y_i\mapsto y_i +\log|\sigma_i(\alpha)| - \frac{\log|N(\alpha)|}{n}$ which allows us to pull out the factor $|N(\alpha)|^{\frac{n-1}{n}}$, and the integral now equals
\begin{align*}
 \prod_{i=1}^r e_i \frac{m_1\dots m_r}{\textup{Reg}_K|N(\alpha)|^{\frac{n-1}{n}}}\int_{\R^r} \frac{dy_1\dots dy_r}{\max(e^{y_1},\dots, e^{y_r}, e^{-Y})^{n-1}}
.\end{align*}
Next, we introduce new coordinates $z_1,\dots , z_r$ such that $(n-1)y_i = nz_i - Z$, where $Z = \sum_{i=1}^r e_iz_i$. We compute the determinant of this coordinate transformation, using the matrix determinant Lemma: \begin{align*}\frac{1}{(n-1)^r}&\det\left(nI - (e_1,\dots,e_r)^T(1,\dots,1)\right)\\
& = \frac{1}{(n-1)^r}\det(nI) \left(1-(1,\dots,1)(\frac{1}{n}I)(e_1,\dots,e_r)^T\right)\\
& =  e_r \frac{n^{r-1}}{(n-1)^r}.\end{align*}
Noting that \[(n-1)Y = \frac{n-1}{e_{r+1}}\sum_{i=1}^{r} e_iy_i =  \frac{n}{e_{r+1}}Z- \frac{1}{e_{r+1}}\sum_{i=1}^r e_iZ = Z, \]
the integral equals \[ \prod_{i=1}^{r+1} e_i \frac{n^{r-1}}{(n-1)^r} \frac{m_1\dots m_r}{\textup{Reg}_K|N(\alpha)|^{\frac{n-1}{n}}}I,\]
where \[I=\int_{\R^r} \frac{dz_1\dots dz_r}{\max(e^{nz_1 - Z},\dots, e^{nz_r - Z}, e^{ - Z})}.\]
Now let $R_k = \{(z_1,\dots, z_r)\in\R^r\mid z_k \geq \max(0, z_1, \dots, z_r)\}$. In this region, the integral equals \begin{align*}\int_{R_k}\frac{dz_1\dots dz_r}{e^{nz_k-Z}} &= \int_{z_k\geq0} \frac{dz_k}{e^{(n-e_k)z_k}}\prod_{\substack{i=1\\i\neq k }}^r\int_{z_i\leq z_k}\frac{dz_i}{e^{-e_iz_i}}\\
&= \int_{z_k\geq0} \frac{dz_k}{e^{(n-e_k)z_k}}\prod_{\substack{i=1\\i\neq k }}^r\frac{1}{e^{-e_iz_k} e_i} \\
&= \int_{0}^\infty \frac{dz_k}{e^{e_{r+1}z_k}} \prod_{\substack{i=1\\i\neq k }}^r\frac{1}{e_i} = \prod_{\substack{i=1\\i\neq k }}^{r+1}\frac{1}{e_i}
,\end{align*}
while in the region $S= \{(z_1,\dots,z_r)\in \R^r\mid z_i<0,\,\, \forall i \}$, the integral equals \begin{align*} \int_{z_i<0\, \forall i} \frac{dz_1\dots dz_r}{e^{-Z}} = \prod_{i=1}^r \int_0^\infty \frac{dz_i}{e^{e_iz_i}} = \prod_{i=1}^{r} \frac{1}{e_i}.\end{align*}
Now $\R^r$ is the disjoint union of the $R_k$ and the set $S$, up to sets of measure zero, hence, the full integral $I$ equals $\frac{\sum_{i=1}^{r+1}e_i}{\prod_{i=1}^{r+1}e_i} = \frac{n}{\prod_{i=1}^{r+1} e_i}$, and the result follows.
\end{proof}

\subsection{The counting theorem}
We are now ready to prove our key result counting elements of bounded norm in congruence classes modulo ideals in rings of integers, with given signs under the real embeddings, up to multiplication by units of infinite order. Let $\frak{C}$ be an ideal class, and let $\b_1, \b_2, \dots$ be the integral ideals in $\frak{C}$ ordered such that $N(\b_1)\leq N(\b_2)\leq \dots$. Define \[\frak{N}(\frak{C})=\sum_{i=1}^{m_1\dots m_r}\frac{1}{N(\b_i)^{\frac{n-1}{n}}}.\]

\bt\label{keylemma}Let $\frak{a}, \frak{f}$ be coprime integral ideals of the ring of integers $\cal{O}_K$ of a number field $K$ of degree $n$. Let $f\in \O_K$, let $\vec{\eta} \in\{\pm\}^{r_1}$, and let $\frak{C}$ be the ideal class of $\frak{a}\frak{f}$. Then for all $t\geq0$,
\begin{align*}\left| \left\{ \alpha\in \a \,
\def\arraystretch{0.75}
\begin{array}{|l} \phi(\alpha)\in \cal{F}_{\vec{\eta}}(t^{n}) \\ \alpha \equiv f\bmod \frak{f} \end{array}\right\}\right| &= \f{(2\pi)^{r_2}\textup{Reg}_K}{\sqrt{|\Delta_K|}} \frac{t^{n}}{N(\a\frak{f})} \\
&\quad+O^{*}\left(n^{\frac{3}{2}n^{2}}e^{4n^2}\frak{N}(\frak{C}^{-1})\frac{t^{n-1}}{N(\frak{a}\frak{f})^{\f{n-1}{n}}}+m_1\dots m_r\right),\end{align*}
where the term $m_1\dots m_r$ can be dropped if $\frak{f}=\O_K$.
\et
\bp
We shall prove that \begin{align}\label{eq:dyadiceq} & \left| \left\{ \alpha\in \a \,
\def\arraystretch{0.75}
\begin{array}{|l} \phi(\alpha)\in \cal{F}_{\half,\vec{\eta}}(t^{n}) \\ \alpha \equiv f\bmod \frak{f} \end{array}\right\}\right| = \f{(2\pi)^{r_2}\textup{Reg}_K}{\sqrt{|\Delta_K|}N(\a\frak{f})} \frac{t^{n}}{2} \\ &\quad\quad\quad\quad\quad\quad\quad\quad\quad\quad\quad\quad\quad+O^{*}\left(\frac{n^{\frac{3}{2}n^{2}}e^{4n^2}}{2}\frak{N}(\frak{C}^{-1})\frac{t^{n-1}}{N(\frak{a}\frak{f})^{\f{n-1}{n}}}+E(t)\right),\nonumber\end{align}
such that the terms $E(t)$ are not present if $\frak{f}=\O_K$, and such that \[\sum_{m=0}^\infty E(t/2^{m/n})\leq m_1\dots m_r.\] The statement in the theorem then follows by a dyadic summation.

Since $\a$ and $\frak{f}$ are coprime, we may choose $f$ to be an element of $\a$ without changing its residue class mod $\frak{f}$. The left hand side then equals \[\left|\left(\phi(\a\frak{f})+\phi(f)\right)\cap \F_{\half,\vec{\eta}}(t^n)\right|.\] We use Schmidt's partition trick, Lemma \ref{schmidtpartition}, to write the left hand side as \begin{equation}\label{eq:summandeq}\sum_{\vec{k}}\left|(\phi(\a\frak{f}) + \phi(f))\cdot \beta_{\vec{k}} \bigcap \F_{\half,\vec{\eta}}(0,\frac{1}{m_1},\dots ,0,\frac{1}{m_r} ;t^n)\right|.\end{equation}
We estimate each summand using Lemma \ref{Widmer}, since by the remark after the theorem, the estimate holds whether or not the translation by $\phi(f)\cdot \beta_{\vec{k}}$ is present. We consider the main term first, \[\frac{\textup{Vol}(\F_{\half,\vec{\eta}}(0,\frac{1}{m_1},\dots ,0,\frac{1}{m_r} ;t^n))}{\det \left(\phi(\a\frak{f})\cdot\beta_{\vec{k}}\right)}.\]
Since $\beta_{\vec{k}}$ is of norm one, the transformation is unimodular, and a standard computation gives \[\det \left(\phi(\a\frak{f})\cdot\beta_{\vec{k}}\right) = \det (\phi(\a\frak{f})) = 2^{-r_2}\sqrt{|\Delta_K|}N(\a\frak{f}).\] Using \eqref{eq:volumeeq} and the fact that $\F$ is symmetric under reflection through the coordinate hyperplanes in the first $r_1$ coordinates, we see that \[\textup{Vol}(\F_{\half,\vec{\eta}}(0,\frac{1}{m_1},\dots ,0,\frac{1}{m_r} ;t^n)) = \frac{1}{2^{r_1}}\frac{2^{r_1}\pi^{r_2}\textup{Reg}_K}{m_1\dots m_r}\frac{t^n}{2},\] so that the main term in each summand equals \[\frac{1}{m_1\dots m_r}\frac{(2\pi)^{r_2}\textup{Reg}_K}{\sqrt{|\Delta_K|}N(\a\frak{f})}\frac{t^n}{2},\] which yields the main term in \eqref{eq:dyadiceq} after summation over $\vec{k}\in\prod_{j=1}^r [0,m_j)\cap\Z$.

We now consider the error term. Since $\F_{\half,\vec{\eta}}(0,\frac{1}{m_1},\dots ,0,\frac{1}{m_r} ;t^n)$ is in Lipschitz class $\mathcal{L}(n,2r+2, Lt)$, where $L=(r+2\pi)^{3/2}e^r$, the error term in estimating each summand is bounded by \[(2r+2)n^{3n^2/2}\max_{0\leq i\leq n-1}\frac{L^it^i}{\lambda_1(\a\frak{f},\vec{k})\dots \lambda_i(\a\frak{f},\vec{k})}\leq (2r+2)n^{3n^2/2}\max_{0\leq i\leq n-1} \left(\frac{Lt}{\lambda_1(\a\frak{f},\vec{k})}\right)^i,\]
where $\lambda_i(\a\frak{f},\vec{k})$ are the successive minima of the lattice $\phi(\a\frak{f})\cdot \beta_{\vec{k}}$. 

We now claim that we may replace this maximum by setting $i=n-1$. This is certainly the case if $\lambda_1(\a,\vec{k})\leq Lt$. In case that $Lt<\lambda_1(\a,\vec{k})$, we have to be careful -- we will prove that the corresponding summand in $\eqref{eq:summandeq}$ is at most one, even after the dyadic summation. Consider two elements $x,y$ in the intersection of $(\phi(\a\frak{f}) + \phi(f))\cdot \beta_{\vec{k}}$ and $\F_{\vec{\eta}}(0,\frac{1}{m_1},\dots ,0,\frac{1}{m_r} ;t^n)$. By Lemma \ref{inverseamgm} we have that $\| x -y \|\leq \|x\| + \|y\| \leq 2\sqrt{r+1}e^r t$. However, $x-y \in \phi(\a\frak{f})\cdot\beta_{\vec{k}}$, and hence $\|x-y\|\geq \lambda_1(\a\frak{f},\vec{k})>Lt$, which is a contradiction. Hence there can only be one exceptional point for each summand in $\eqref{eq:summandeq}$, even after the dyadic summation. We note that in case that $\frak{f}=\O_K$, we can use the same argument for $x$ in place of $x-y$ to get a contradiction, so in that case there are no exceptional points. We write these potential contributions in a separate error term as $E(t)$, which after dyadic summation will sum up to at most $m_1\cdots m_k$. 

We will now show that the main term is bounded by the error term with the maximum replaced by setting $i=n-1$. More precisely, we need to show that \begin{equation}\label{eq:mainvserroreq}\frac{1}{m_1\dots m_r}\frac{(2\pi)^{r_2}\textup{Reg}_K}{\sqrt{|\Delta_K|}N(\a\frak{f})}\frac{t^n}{2}\leq (2r+2)n^{3n^2/2} \frac{L^{n-1}t^{n-1}}{\lambda_1(\a\frak{f},\vec{k})^{n-1}}.\end{equation}
We may multiply the right hand side with the factor $\frac{Lt}{\lambda_1(\a\frak{f},\vec{k})}< 1$. By Minkowski's second theorem we have that \[\lambda_1(\a\frak{f},\vec{k})^{n}\leq \frac{2^n}{\pi^{n/2}}\Gamma(1+\frac{n}{2}) \det \phi(\a\frak{f}).\beta_{\vec{k}} = \frac{2^n}{\pi^{n/2}}\Gamma(1+\frac{n}{2}) 2^{-r_2}\sqrt{|\Delta_K|}N(\a\frak{f}),\] and by Lemma \ref{Regvsproduct} we have that Reg$_K\leq 2^{r+1}(r+1)^{r/2} m_1\dots m_r$, so after plugging in these inequalities, \eqref{eq:mainvserroreq} follows from the inequality \[\frac{\pi^{r_2}}{2} \frac{2^n}{\pi^{n/2}}\Gamma(1+\frac{n}{2})  2^{r+1}(r+1)^{r/2} \leq (2r+2)n^{3n^2/2} L^n.\]
%
%
%
Hence, the total error term in \eqref{eq:dyadiceq} is bounded by \begin{equation}\label{eq:totalerroreq}(2r+2)n^{3n^2/2}L^{n-1}t^{n-1}\sum_{\vec{k}} \frac{1}{\lambda_1(\a\frak{f},\vec{k})^{n-1}}+E(t).\end{equation}
The last part of the proof is to bound the sum over $\vec{k}$. Define \[\mu(\a\frak{f},\vec{k})=\min_{\alpha\in\a\frak{f}} \max_i (|\sigma_i(\alpha)| \prod_{j=1}^r|\sigma_i(\eps_j)|^{-k_j/m_j}),\] and note that $\lambda_1(\a\frak{f},\vec{k})\geq \mu(\a\frak{f},\vec{k})$. For each element $\alpha$, define $K_\alpha$ as the set of $\vec{k}\in\prod_{j=1}^r [0,m_j)\cap\Z$ such that $\alpha$ attains the minimal value in the definition of $\mu(\a\frak{f},\vec{k})$. We change the order in the sum 
\begin{align*} \sum_{\vec{k}} \frac{1}{\lambda_1(\a\frak{f},\vec{k})^{n-1}} 
& \leq \sum_{\vec{k}} \frac{1}{\mu(\a\frak{f},\vec{k})^{n-1}}\\
& \leq \sum_{\alpha\in \a\frak{f}}{'} \sum_{u\in U_K} \sum_{\vec{k}\in K_{u\alpha}}\frac{1}{\max_i(|\sigma_i(u\alpha)| \prod_{j=1}^r|\sigma_i(\eps_j)|^{-k_j/m_j})^{n-1}}\\
& \leq \sum_{\alpha\in \a\frak{f}}{'} \sum_{\vec{k}\in \Z^r}\frac{1}{\max_i(|\sigma_i(\alpha)| \prod_{j=1}^r|\sigma_i(\eps_j)|^{-k_j/m_j})^{n-1}},
\end{align*}
where the dash signifies that the sum runs over pairwise non-associated elements $\alpha$, and the number of summands is at most $m_1\dots m_r$.

We denote by $g(\vec{k})$ the function in the last sum above. For any $\vec{\delta}\in [0,1]^{r}$, we see that $g(\vec{k})\leq (e^r)^{n-1} g(\vec{k}+\vec{\delta})$, and hence \[\sum_{k\in \Z^r} g(\vec{k})\leq e^{r(n-1)}\int_{\vec{k}\in\R^r} g(\vec{k})dk_1\dots dk_r.\]

By Lemma \ref{integraleval}, the sum is then bounded \[\sum_{\vec{k}} \frac{1}{\lambda_1(\a\frak{f},\vec{k})^{n-1}} \leq e^{rn}\frac{m_1\dots m_r}{\textup{Reg}_K}\sum_{\alpha\in \a\frak{f}}{'} \frac{1}{|N(\alpha)|^{\frac{n-1}{n}}}.\]

Since the elements $\alpha\in \a\frak{f}$, it follows that $(\alpha)=\a\frak{f}\b$, for a certain $\b\in \frak{C}^{-1}$. Since the $\alpha$ appearing in the sum are non-associated, all respective ideals $\b$ are different. Hence \[\sum_{\alpha\in \a\frak{f}}{'} \frac{1}{|N(\alpha)|^{\frac{n-1}{n}}}\leq \frac{1}{N(\a\frak{f})^{\frac{n-1}{n}}}\frak{N}(\frak{C}^{-1}).\]

Gathering all this into \eqref{eq:totalerroreq}, we see that the total error in \eqref{eq:dyadiceq} is bounded by \[(2r+2)n^{3n^2/2}L^{n-1} e^{rn} \frac{m_1\dots m_r}{\textup{Reg}_K}\frac{t^{n-1}}{N(\a\frak{f})^{\frac{n-1}{n}}}\frak{N}(\frak{C}^{-1})+E(t).\]
Using Lemma \ref{Regvsproduct} to bound $m_1\dots m_r \leq (2rn)^{2r}\textup{Reg}_K$, and plugging in $L=\sqrt{n}(r+2\pi)e^r$, the final step is to check that \[(2r+2)n^{3n^2/2}((r+2\pi)^{3/2}e^r)^{n-1} e^{rn} (2rn)^{2r}\leq n^{3n^{2}/2}\frac{e^{4n^2}}{2}.\]
\end{proof}
\begin{rem} We give some comments on the error term. In terms of $N(\a\frak{f})$, the result is best possible. The exponent of $t$ might be improved, but would require an entirely different approach, as it is a consequence of our use of the Lipschitz-constant. In any case, any improvement of the exponent of $t$ will naturally result to the same exponent in the successive minima $\lambda_i$, and thus in the power of $N(\frak{a}\frak{f})$, which for our purposes would make little difference in the end. With regards to the field constants, we will see in Lemma \ref{boundfinally} that $\frak{N}(\frak{C}^{-1})$ is essentially $\textup{Reg}_K^{1/n}$; this substantial improvement over the linear dependence in the regulator that one gets by Schmidt's partition trick is the result of our careful analysis of the successive minima. In terms of the degree $n$, the main factor $n^{3n^2/2}$ comes from Lemma \ref{Widmer}, and with our approach, it can only be improved by finding a bound on the orthogonality defect of ideal lattices. As far as the author is aware, there is no reason to assume that ideal lattices have small orthogonality defect.
\end{rem}
\begin{rem}
In the case $\a=\frak{f}=\cal{O}_K$, Theorem \ref{keylemma} is essentially (up to the quality of the constant in the error term) the same as the main result in \cite{MurtyVanOrder}, although we should mention that their proof is incomplete.
\end{rem}

In order to bound the quantity $\frak{N}(\frak{C})$, we will need the following lemma. We allow some superfluous logarithms (at least in the limit $y\rightarrow \infty$), for the sake of obtaining a clean explicit statement valid for all $y$.

\begin{lemma}\label{boundfinally} Let $\b_1,\b_2,\dots $ be the integral ideals in $\O_K$, ordered such that $N(\b_1)\leq N(\b_2) \leq \dots$. Then, for all $y\geq2$, \[\sum_{i=1}^{y}\frac{1}{N(\b_i)^{\frac{n-1}{n}}} \leq 6ny^{1/n}\log(y)^{\frac{(n-1)^2}{n}}.\]
\end{lemma}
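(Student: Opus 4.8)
The plan is to estimate $\sum_{i=1}^{y} N(\b_i)^{-(n-1)/n}$ by controlling the counting function $A(x) = |\{\b : N(\b)\leq x\}| = \sum_{k\leq x} a_k$ and then using partial summation. The only external input I need is a bound of the shape $A(x) \leq C x$ valid for all $x\geq 1$, where $C$ may depend on $K$ but should be explicit; such a bound is available since the ideals of bounded norm correspond (via the decomposition into ideal classes) to integral elements of bounded norm up to units, and Theorem~\ref{keylemma} (applied with $\a=\ff=\O_K$, summed over the $h_K$ ideal classes, over sign patterns $\vec\eta$, and divided by the number of roots of unity) gives exactly such an estimate. In fact it is cleaner to avoid invoking the full strength of Theorem~\ref{keylemma} and instead use the crude bound that the number of nonzero integral elements of norm at most $x$, up to units, is $O^*$ of something like $\kappa_K x + (\text{field constant})\, x^{1-1/n}$; for $x\geq 1$ this collapses to $A(x)\leq c\,x$ with $c$ explicit. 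The superfluous logarithm factors in the statement give me exactly the slack I need to be wasteful here.

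First I would fix the threshold: since $N(\b_i)$ is the $i$-th smallest norm, $A(N(\b_i)-1) < i$, so $N(\b_i) > $ (something comparable to $i/C$), giving a lower bound $N(\b_i) \gg i / C$. More precisely, $i \leq A(N(\b_i)) \leq C\,N(\b_i)$, hence $N(\b_i)\geq i/C$. This immediately yields
\[
\sum_{i=1}^{y}\frac{1}{N(\b_i)^{(n-1)/n}} \leq C^{(n-1)/n}\sum_{i=1}^{y} \frac{1}{i^{(n-1)/n}} \leq C^{(n-1)/n}\cdot n\, y^{1/n},
\]
using $\sum_{i\leq y} i^{-(n-1)/n} \leq 1 + \int_1^y t^{-(n-1)/n}\,dt = 1 + n(y^{1/n}-1) \leq n\,y^{1/n}$ for $y\geq 1$. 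So the whole game reduces to showing that the admissible constant $C$ satisfies $C^{(n-1)/n} \leq 6\log(y)^{(n-1)^2/n}$, i.e.\ that $C \leq (6\log y)^{n-1}$ — and here is where the $y\geq 2$ hypothesis and the logarithmic factors are spent.

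That last inequality is \emph{not} true for all $y$ if $C$ is a fixed large field constant, so the real structure of the argument must be different: one splits the range $i\leq y$ according to whether $N(\b_i)$ is "small" or "large" relative to a cutoff, or — cleaner — one bounds $A(x)$ not by $Cx$ but by a bound of the form $A(x)\leq \max(1, c'x)$ that is only useful once $x$ exceeds the field constant, and handles the initial segment $i\leq i_0$ (where $i_0$ is of size the field constant) trivially by $N(\b_i)\geq 1$, contributing at most $i_0 \leq (\text{field const})$ to the sum, which one then absorbs into $6n y^{1/n}\log(y)^{(n-1)^2/n}$ using that $\log(y)^{(n-1)^2/n}$ is large. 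I expect the main obstacle to be exactly this bookkeeping: getting an honest, fully explicit bound on the number of ideals (or elements) of norm at most $x$ that is clean enough to push through the partial summation while keeping every constant under $6n$, and in particular verifying that the field-dependent contributions from the "small $i$" regime genuinely fit under the claimed $\log(y)^{(n-1)^2/n}$ for all $y\geq 2$ — this is where one must be careful not to have a circular dependence on $\frak{N}(\frak C)$ or $\mathrm{Reg}_K$ that is not yet controlled at this point in the paper. A safe route is to use only the trivial lattice-point bound $A(x)\leq 1 + (\text{vol term}) + (\text{boundary term})$ coming directly from Lemma~\ref{Widmer} with the unconditional orthogonality-defect estimate, since that keeps the constants self-contained.
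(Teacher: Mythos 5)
There is a genuine gap. Your skeleton (rank inversion $i \leq A(N(\b_i))$ followed by $\sum_{i\leq y} i^{-(n-1)/n} \leq n\,y^{1/n}$) is exactly the paper's, but the bound you feed into it is the wrong one, and you half-recognise this yourself. You take $A(x)\leq Cx$ with $C$ a \emph{field} constant (from Theorem \ref{keylemma} or from Lemma \ref{Widmer}; either way $C$ involves $\kappa_K$, $\reg_K$, $h_K$), observe correctly that $C^{(n-1)/n}\leq 6(\log y)^{(n-1)^2/n}$ fails, and then propose to rescue the argument by bounding the initial segment $i\leq i_0$ trivially by $i_0\asymp C$ and ``absorbing'' it. This absorption is impossible: the lemma's right-hand side depends only on $n$ and $y$, so for $y\asymp i_0$ you would need $i_0 \leq 6n\, i_0^{1/n}(\log i_0)^{(n-1)^2/n}$, i.e.\ $i_0 \leq (6n)^{n/(n-1)}(\log i_0)^{n-1}$, which is false once the field constant is large — and $\kappa_K$, $\reg_K h_K$ are unbounded over fields of fixed degree. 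No splitting of the range can fix this as long as the only input is a linear bound with a field-dependent slope; the statement being proved is uniform in $K$, so the proof must be too.

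The missing idea is to bound the ideal-counting function \emph{pointwise and field-independently}: since each rational prime has at most $n$ prime ideals above it, $a_m \leq d_n(m)$, the $n$-fold divisor function, and Bordell\`es' explicit estimate gives $\sum_{m\leq x} d_n(m)\leq 2x(\log x)^{n-1}$ for $x\geq 6$. Feeding $A(x)\leq 2x(\log x)^{n-1}$ into your rank inversion gives $N(\b_i)\geq i/(6(\log i)^{n-1})$ for all $i\geq 2$ (the factor $6$ absorbing the small-$x$ regime), and raising this to the power $\tfrac{n-1}{n}$ is precisely what produces the exponent $\tfrac{(n-1)^2}{n}$ on the logarithm. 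So the logarithmic factor in the statement is not ``slack'' to spend on a wasteful constant — it is forced by the only field-free bound available on $A(x)$, and your proposal never produces it.
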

\bp Let $m\in\N$, then $a_m$ can be majored by assuming that all prime factors of $m$ split completely. Then, \[a_m \leq d_n(m),\] where $d_n(m)$ is the number of ways you can write $m$ as a product of $n$ integers. A result by Bordell\`es \cite{Bordel} gives the explicit upper bound \[\sum_{1\leq m\leq x}d_n(m)\leq 2x(\log x)^{n-1},\]
if $x\geq6$. It follows that $N(\b_i)\geq \frac{i}{2(\log i)^{n-1}}=:x(i)$, if $x(i)$ is at least $6$. Indeed, if $N(\b_i)<x(i)$, then it would follow that $i\leq \sum_1^{x(i)}a_m < i$. For those $i\geq 2$ such that $x(i)\leq 6$, note that $N(\b_i)\geq 2$ and $2\geq x(i)/3$. Hence, for all $i\geq 2$, \[N(\b_i)\geq \frac{i}{6(\log i)^{n-1}}.\] The result follows immediately.
\end{proof}

We can now easily deduce the explicit bound on the number of ideals of norm up to $x$ stated in the introduction, hence making Landau's classical proof of the meromorphic continuation of $\zeta_K(s)$ to Re$(s)>1-\frac{1}{n}$ explicit. Recall that $\zeta_K(s) = \sum_{n=1}^{\infty}\frac{a_n}{n^s}$, where $a_n$ is the number of ideals of norm $n$, and $\kappa_K = \textup{res}_{s=1}\zeta_K(s)$. We repeat the statement here for convenience.

\begin{cornonumber*}Let $K$ be a number field of degree $n$, and let $a_k$ be the number of ideals of norm $k$. Then, for all $x\geq 1$, \[\sum_{k\leq x}a_k = \kappa_K x + O^{*}\left(n^{10n^2}(\textup{Reg}_Kh_K)^{1/n}(1+ \log\textup{Reg}_Kh_K)^{\frac{(n-1)^2}{n}} x^{1-\frac{1}{n}}\right).\]
Moreover, let $\mathfrak{c}$ be an element of the class group of $K$, and let $b_k$ be the number of ideals of norm $k$ which lie in $\mathfrak{c}$. Then,
\[\sum_{k\leq x}b_k = \frac{\kappa_K}{h_K} x + O^{*}\left(n^{10n^2}(\textup{Reg}_K)^{1/n}(1+ \log\textup{Reg}_K)^{\frac{(n-1)^2}{n}} x^{1-\frac{1}{n}}\right).\]
\end{cornonumber*}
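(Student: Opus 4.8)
The plan is to deduce the Corollary from Theorem \ref{keylemma} by summing over ideal classes and over the sign patterns $\vec\eta$, and then invoking Lemma \ref{boundfinally} to control the field constant $\frak{N}(\frak{C}^{-1})$. The key observation is that every nonzero integral ideal $\b$ with $N(\b)\le x$ in a fixed class $\frak{c}^{-1}$ is of the form $\b = (\alpha)\a^{-1}$ for a generator $\alpha$ of the principal ideal $\b\a$, where $\a$ is a fixed integral ideal in the class $\frak{c}$ chosen once and for all; the generator $\alpha$ is unique up to multiplication by units, i.e.\ up to multiplication by a torsion unit and an element of $U_K$. So I would take $\ff = \O_K$, $f$ arbitrary, let $\frak{C} = \frak{c}$, and for each of the $w_K$ torsion units (or rather, for each sign pattern $\vec\eta$, which is the relevant combinatorial datum for pinning down the torsion ambiguity together with the fundamental domain $\F$) apply Theorem \ref{keylemma}. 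More precisely, counting elements $\alpha \in \a$ with $\phi(\alpha) \in \F_{\vec\eta}(t^n)$, summed over all $\vec\eta \in \{\pm\}^{r_1}$ and adjusting for the torsion units of infinite... for the roots of unity, recovers exactly the count of integral ideals $\b$ in the class with $N(\b\a)\le t^n$, hence $N(\b) \le t^n/N(\a) =: x$.

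Concretely, for the class-restricted count $\sum_{k\le x} b_k$ I would fix the class $\frak{c}$, pick a representative ideal $\a$, set $t = (x N(\a))^{1/n}$, and sum the formula of Theorem \ref{keylemma} (with $\ff = \O_K$, so the $m_1\cdots m_r$ term is dropped) over the $2^{r_1}$ sign vectors $\vec\eta$; the union of the $\F_{\vec\eta}$ over $\vec\eta$ is a fundamental domain for the action of $U_K$ on the nonzero elements of Minkowski space outside the coordinate hyperplanes, so after dividing out by the $w_K$ roots of unity — which I would absorb using the standard fact that $2^{r_1}/w_K$ times the relevant volume gives the main term, or more cleanly by noting the roots of unity act freely on sign patterns together with the argument coordinates — the count of non-associated $\alpha \in \a$ becomes the count of ideals $\b$ with $N(\b)\le x$ in the class. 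The main term assembles to $\frac{(2\pi)^{r_2}\reg_K}{w_K\sqrt{|\Delta_K|}} \cdot \frac{2^{r_1} t^n}{N(\a)} = \frac{(2\pi)^{r_2} 2^{r_1}\reg_K}{w_K\sqrt{|\Delta_K|}} x$, which is exactly $\frac{\kappa_K}{h_K} x$ by the analytic class number formula $\kappa_K = \frac{2^{r_1}(2\pi)^{r_2} h_K \reg_K}{w_K \sqrt{|\Delta_K|}}$. For the error term, Theorem \ref{keylemma} gives $O^*\big(n^{\frac32 n^2} e^{4n^2} \frak{N}(\frak{C}^{-1}) \frac{t^{n-1}}{N(\a)^{(n-1)/n}}\big)$ per sign vector, and $\frac{t^{n-1}}{N(\a)^{(n-1)/n}} = x^{1-1/n}$; since $\frak{N}(\frak{C}^{-1}) = \sum_{i=1}^{m_1\cdots m_r} N(\b_i)^{-(n-1)/n}$ with the $\b_i$ the smallest-norm ideals in $\frak{C}^{-1}$, bounding this by the same sum over \emph{all} ideals and applying Lemma \ref{boundfinally} with $y = m_1\cdots m_r$ gives $\frak{N}(\frak{C}^{-1}) \le 6n (m_1\cdots m_r)^{1/n}\log(m_1\cdots m_r)^{(n-1)^2/n}$, and then Lemma \ref{Regvsproduct} bounds $m_1\cdots m_r \le (2rn)^{2r}\reg_K$, so $\frak{N}(\frak{C}^{-1}) \ll n\,(2rn)^{2r/n}\reg_K^{1/n}(1+\log\reg_K)^{(n-1)^2/n}$ up to the constants absorbed in $\log((2rn)^{2r}) \ll n^2$. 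Collecting all degree factors — $n^{\frac32 n^2} e^{4n^2}$ from the theorem, times $6n(2rn)^{2r/n}$ and the logarithmic slack, times $2^{r_1}\le 2^n$ from the sum over $\vec\eta$ — one checks these combine to at most $n^{10n^2}$ for the class-restricted statement, which involves only $\reg_K$.

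For the unrestricted count $\sum_{k\le x} a_k$ I would simply sum the class-restricted result over all $h_K$ ideal classes $\frak{c}$; the main terms add to $\kappa_K x$, and the error terms add to $h_K$ times the previous error. Here the bound on $\frak{N}(\frak{C}^{-1})$ is applied with $\frak{C}^{-1}$ ranging over all classes, but since in every case we majorize by the sum over all integral ideals, the same bound $\frak{N}(\frak{C}^{-1}) \ll n(2rn)^{2r/n}\reg_K^{1/n}(1+\log\reg_K)^{(n-1)^2/n}$ holds uniformly; multiplying by $h_K$ and noting $h_K \le h_K \cdot 1$ while we want $(\reg_K h_K)^{1/n}(1+\log\reg_K h_K)^{(n-1)^2/n}$ on the right, I absorb the discrepancy $h_K = (h_K)^{1/n}\cdot h_K^{1-1/n}$ by the trivial inequality $h_K^{1-1/n}(\reg_K)^{1/n} \le (\reg_K h_K)^{1/n} \cdot h_K^{1-1/n}$ — more carefully, $h_K \cdot \reg_K^{1/n} \le (\reg_K h_K)^{1/n} \cdot h_K^{1-1/n} \le (\reg_K h_K)^{1/n}\cdot(\reg_K h_K)^{1-1/n}$ only if $h_K \le \reg_K h_K$, which holds since $\reg_K \ge 1$; and similarly the log factor. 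So $h_K \cdot \reg_K^{1/n}(1+\log\reg_K)^{(n-1)^2/n} \le (\reg_K h_K)^{1/n}(1+\log(\reg_K h_K))^{(n-1)^2/n} \cdot h_K^{1-1/n}$, hmm — this still leaves a spurious $h_K^{1-1/n}$; the cleaner route is to observe that when we sum over classes, the relevant quantity is $\sum_{\frak{c}} \frak{N}(\frak{c}^{-1})$, and bounding each $\frak{N}(\frak{c}^{-1})$ by the sum over the $m_1\cdots m_r$ \emph{globally} smallest ideals (ignoring the class constraint) is wasteful; instead I bound $\sum_{\frak{c}}\frak{N}(\frak{c}^{-1})$ directly as $\sum_{i=1}^{h_K m_1\cdots m_r} N(\b_i)^{-(n-1)/n}$ over \emph{all} integral ideals $\b_i$ — since across all classes there are $h_K m_1\cdots m_r$ terms total and grouping the smallest ones only increases the sum — and apply Lemma \ref{boundfinally} with $y = h_K m_1\cdots m_r \le h_K (2rn)^{2r}\reg_K$, giving the clean $(\reg_K h_K)^{1/n}(1+\log\reg_K h_K)^{(n-1)^2/n}$ dependence directly. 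The main obstacle, and the step requiring the most care, is exactly this bookkeeping: correctly identifying the fundamental domain / torsion-unit count so that the main term matches $\kappa_K x$ via the class number formula, and routing the summation over classes through Lemma \ref{boundfinally} in the order that produces $(\reg_K h_K)^{1/n}$ rather than $h_K \reg_K^{1/n}$ — everything else is a matter of collecting the explicit degree constants and checking $n^{\frac32 n^2}e^{4n^2}\cdot(\text{polynomial and sub-exponential factors in }n) \le n^{10n^2}$ for all $n\ge 2$, and that the range $x\ge 1$ is legitimate because for $x < $ (the smallest norm making the estimate nontrivial) the error term already dominates $\kappa_K x$ trivially.
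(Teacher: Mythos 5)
Your proposal is correct and follows essentially the same route as the paper: pass to a representative $\b$ of the inverse class, count generators of principal ideals in $\b$ via Theorem \ref{keylemma} with $\ff=\O_K$ summed over the $2^{r_1}$ sign vectors and divided by the roots of unity, match the main term with $\kappa_K x/h_K$ through the class number formula, and control $\frak{N}$ by Lemma \ref{boundfinally} with $y=m_1\cdots m_r$ for a single class and $y=h_Km_1\cdots m_r$ for the sum over all classes (together with Lemma \ref{Regvsproduct}). In particular, your final observation that one must bound $\sum_{\c}\frak{N}(\c)$ globally by the $h_Km_1\cdots m_r$ smallest integral ideals, rather than bounding each class separately, is exactly how the paper obtains the $(\textup{Reg}_Kh_K)^{1/n}$ dependence.
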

\begin{proof}
Let $\b$ be an integral ideal in the inverse class $\frak{c}^{-1}$. We then have a bijection between the set of integral ideals in $\frak{f}$ and the set of principal ideals in $\b$ \[\a \mapsto \a\b=(\alpha),\] with inverse $(\alpha)\mapsto(\alpha)\b^{-1}$, which is an integral ideal since $\b|(\alpha)$. Thus we may count the ideals $\a$ in the class $\frC$ of norm up to $x$ by counting the principal ideals inside $\b$ of norm up to $xN(\b)$. We do this by counting elements up to multiplication by units. If $\omega$ is the number of roots of unity in $K$, we may write the number of all ideals in $\frak{c}$ of norm up to $x$ as \[\frac{1}{\omega}|\{\alpha\in\b\mid\phi(\alpha)\in\cal{F}(xN(\b))\}|.\]
Using the key lemma, we see that this equals \begin{equation}\label{eq:singleclass}\f{\kappa_K}{h_K} x + O^{*}\left(2^{r_1}n^{\frac{3}{2}n^{2}}e^{4n^2}\mathfrak{N}(\frak{c})x^{1-\f{1}{n}}\right).\end{equation}
Using Lemma \ref{boundfinally} with $y=m_1\dots m_r\leq (2rn)^{2r} \textup{Reg}_K$, the statement for a single class follows after checking that \[2^{r_1} n^{\frac{3}{2}n^2}e^{4n^2} 6n (2rn)^{2r/n}(2r\log(2rn))^{\frac{(n-1)^2}{n}}\leq n^{10n^2}.\]
Furthermore, summing \eqref{eq:singleclass} over all ideal classes $\frak{c}$, we get \[\sum_{m\leq x}a_m = \kappa x + O^{*}\left(2^{r_1}n^{\frac{3}{2}n^{2}}e^{4n^2}\sum_{\frak{c}}\mathfrak{N}(\frak{c})x^{1-\f{1}{n}}\right).\]
Using Lemma \ref{boundfinally} again, now with $y=h_Km_1\dots m_r$ then yields the result. \end{proof}

\section{The Selberg sieve in rings of integers}

We will use the Selberg sieve in the ring of integers $\O_K$, or more precisely, in the arithmetic semigroup of the integral ideals. We note that in the literature, the phrase ``Selberg sieve in rings of integers" has been used with another meaning (see \cite{Hinz}, \cite{Rieger}, \cite{Sarges} and \cite{Schaal}), namely the sifting of integral elements whose conjugates are restricted to lie in a box. This is a quite different story from our setup of working with ideals, which will amount to considering integral elements up to multiplication by units.

Concretely, let $\A$ be a set of nonzero integral ideals in $\O_K$, let $\P$ be a set of prime ideals in $\O_K$, and let $\overline{\P}$ be the complement of $\P$ in the set of prime ideals. The sifted set is defined, analogously to the case $K=\Q$, as \[S(\A,\P,z) = \{\a\in\A \;\mid (\a, \P(z))=1 \},\quad \textup{where}\quad\P(z)=\prod_{\substack{\p\in\P\\ N(\p)\leq z}}\p.\]
In what follows, $\mu$ will denote the M\"obius function on the integral ideals, $\nu$ is the function counting prime divisors, defined by $\nu(\p_1^{e_1}\dots\p_k^{e_k})=\sum e_i$, and $\phi(\d)=|(\O_K/\d)^{*}| = \prod_{\p^k\| \d}N(\p^{k-1})(N(\p)-1)$ is the generalisation of the euler totient. One may then completely transfer Chapter 3.1 in \cite{HalberstamRichert} to our situation, working with integral ideals instead of integers. In particular, the Selberg sieve weights are now a collection of reals $\lambda_\d$ where $\d$ ranges over the squarefree integral ideals. The only caveat is that all instances of integers in situations where its magnitude is the relevant property should be translated into the norms of the integral ideals. For completeness we give the definition of the relevant quantities. Let $\omega$ be a multiplicative function on the integral ideals.
\ba &g(\d) = \f{\omega(\d)}{N(\d)\prod_{\p|\d}(1-\f{\omega(\p)}{N(\p)})}\\
&G_{\mathfrak{k}}(x)=\sum_{\substack{N(\d)<x\\(\d,\mathfrak{k})=1}}\mu^2(\d)g(\d), \textup{ and }G(x)=G_{\O_K}(x)\\
&\lambda_\d = \f{\mu(\d)}{\prod_{\p|\d}(1-\f{\omega(\p)}{N(\p)})}\f{G_\d(z/N(\d))}{G(z)}
\end{align*}

With this slightly generalised setup, Theorem 3.2 in \cite{HalberstamRichert} immediately generalises to the following Theorem.
\bt\label{selberg} Assume that $|\A_\d| = \frac{\omega(\d)}{N(\d)}X + R_\d$, where $\omega$ is a multiplicative function on the integral ideals in $\O_K$ satisfying $0\leq \frac{\omega(\p)}{N(\p)}\leq 1-\frac{1}{A}$ for some constant $A$. Then
\[S(\A,\P,z)\leq \frac{X}{G(z)} + \Sigma_2,\]
where \[\Sigma_2 \leq \sum_{\substack{N(\d_i)\leq z\\\d_i|\P(z)\\i=1,2}} |R_{[\d_1,\d_2]}|\leq \sum_{\substack{N(\d)<z^2\\\d|\P(z)}}3^{\nu(\d)}|R_{\d}|.\]
\et

In the case that $\omega= 1$, we can give a lower bound for $G(z)$.

\bl\label{mainterm} Let $K$ be a number field of degree $n$. Let $\omega(\p)=1,  \forall \, \p\in\P$, and $\omega(\p)=0, \forall \, \p\in\overline{\P}$. Let $c=n^{10n^2}(\textup{Reg}_Kh_K)^{1/n}(1+ \log\textup{Reg}_Kh_K)^{\frac{(n-1)^2}{n}}$. Then we have, for all $z\geq (ec/\kappa_K)^n$, that\begin{equation*}G(z) \geq \prod_{\substack{\p\in\bar{\P}\\N(\p)\leq z}}\left(1-\f{1}{N(\p)}\right) \kappa_K \log\left(\frac{\kappa_K^n z}{e^nc^n}\right) .\end{equation*}
\el
\bp
Denote $\mathfrak{m}=\prod_{\substack{\p\in\overline{\P}\\N(\p)\leq z}}\p$. Then \begin{align*}G(z)= \sum_{\substack{N(\d)<z\\(\d,\m)=1}}\f{\mu^2(\d)}{\prod_{\p|\d}(N(\p)-1)} =\sum_{\substack{N(\d)<z\\(\d,\m)=1}}\f{\mu^2(\d)}{\phi(\d)}.\end{align*}
Note first that \ba G(z)\prod_{\substack{N(\q)<z \\ \q|\m}} \left(1-\f{1}{N(\q)}\right)^{-1} &=\sum_{\substack{(\d,\m)=1\\N(\d)\leq z}}\f{\mu^2(\d)}{\phi(\d)} \prod_{\substack{N(\q)<z \\ \q|\m}} \left(1-\f{1}{N(\q)}\right)^{-1}\\
&=\sum_{\substack{(\d,\m)=1\\N(\d)\leq z}}\f{\mu^2(\d)}{\phi(\d)} \prod_{\substack{N(\q)<z \\ \q|\m}} \left(1+\f{1}{N(\q) -1}\right)\\
&\geq\sum_{N(\d)\leq z}\f{\mu^2(\d)}{\phi(\d)}.\end{align*}
We may further reduce the sum\[\sum_{N(\d)\leq z}\f{\mu^2(\d)}{\phi(\d)}
= \sum_{N(\d)\leq z} \f{\mu^2(\d)}{N(\d)} \prod_{\p|\d} \left(1-\f{1}{N(\p)}\right)^{-1}
\geq \sum_{N(\a)\leq z}\f{1}{N(\a)}.\]
Let $S(x)=\sum_{m\leq x}a_m$. By partial summation,
\begin{align*}\sum_{m\leq z}\f{a_m}{m} 
&= \sum_{m\leq y}\f{a_m}{m} + \sum_{y\leq m\leq z-1}\frac{S(m)}{m(m+1)} + \frac{S(z-1)}{z-1} -\frac{S(y-1)}{y} \\
&\geq \kappa_K(\log(z)-\log(y+1)) - nc \frac{1}{(y+1)^{1/n}},
\end{align*}
where we have used Theorem \ref{explicitideals}. Choosing $y+1=(c/\kappa_K)^n$ yields the theorem.
\end{proof}
The following lemma deals with the error term in the case that $|R_\d| \ll \frac{1}{N(\d)^{1-\f{1}{n}}}$.
\bl\label{errorterm} Let $K$ be a number field of degree $n$. Then, for all $z\geq 16$,\begin{equation*}
\sum_{\substack{N(\d)\leq z^2\\ \d|\P(z)}}3^{\nu(\d)}\frac{1}{N(\d)^{1-\f{1}{n}}} \leq z^{2/n} (2\log z)^{3n}.
\end{equation*}
\el
\bp
Since a prime can split into at most $n$ prime ideals $\p$ in $\O_K$, with norms at least $p$, we have that \[\sum_{N(\p)\leq z}\f{1}{N(\p)}\leq n\sum_{p\leq z}\frac{1}{p}.\] 
Since by \cite{Dusart}, $p_k\geq k(\log k + \log\log k -1)\geq k\log k$ for $k\geq e^e=15,154...$. Hence, \[\sum_{p\leq z}\frac{1}{p}=\sum_{k\leq 15}\frac{1}{p_k}+\sum_{16\leq k\leq z}\frac{1}{p_k}\leq 0,666 + \log\log z,\]
for $z\geq 16$. We may conclude that
\begin{equation}\label{eq:eqprod}\prod_{N(\p)\leq z}\left(1+\f{1}{N(\p)}\right) 
\leq e^{\sum_{N(\p)\leq z}\f{1}{N(\p)}}
\leq \left(2\log z\right)^{n},\end{equation}
since $e^{0,666}\leq 2$. Now note that \begin{align*}\sum_{\substack{N(\d)\leq z^2\\ \d|\P(z)}}3^{\nu(\d)}\frac{1}{N(\d)^{1-\f{1}{n}}} & \leq z^{2/n} \sum_{\substack{N(\d)\leq z^2\\ \d|\P(z)}}3^{\nu(\d)}\frac{1}{N(\d)}\\
&\leq z^{2/n} \prod_{\substack{N(\p)\leq z\\\p\in\P}}\left(1+\frac{1}{N(\p)}\right)^3\\
&\leq z^{2/n} (2\log z)^{3n}. \qedhere
\end{align*}
\end{proof}

\section{Artin's Reciprocity law and proof of Theorem \ref{maintheorem}}
Let $\ff$ an integral ideal in $\O_K$. Recall the narrow class group to the modulus $\ff$, denoted by $H_\ff^{*}$. We will denote the order of this group by $h_\ff^{*}$. Consider the map \begin{align*}\rho : \{\alpha\in\O_K\mid (\alpha, \ff)=1\} &\rightarrow \big(\O_K/\ff \big)^{*} \times \{\pm1\}^{r_1}\\ 
\alpha & \mapsto ( \alpha + \ff , \sgn \sigma_1(\alpha),\dots , \sgn \sigma_{r_1}(\alpha)),
\end{align*}
and let $V_K$ be the group of all units in $\O_K$, including roots of unity. Then $(\alpha)$ is in the trivial class, denoted $\e$, of $H_\ff^{*}$ if and only if $\rho(\alpha)\in\rho(V_K)$, and we have that \[h_\ff^{*}=\frac{2^{r_1}\phi(\ff)}{|\rho(V_K)|}h_K,\]
moreover, each ideal class splits into \[\frac{2^{r_1}\prod_{\p|\ff} \big(1-\frac{1}{N(\p)}\big) N(\ff)}{|\rho(V_K)|}\] narrow classes mod $\ff$.
The narrow class groups mod $\ff$ are relevant in Class Field Theory, and in the Artin reciprocity law in particular. One possible statement of Artin reciprocity law reads as follows (see \cite[p. 389]{Narkiewicz}).
\bt(Artin Reciprocity Law) If $L/K$ is Abelian and $G$ is its Galois group, then there exists an ideal $\ff$ in $\O_K$ such that the prime ideals ramified under $L/K$ coincide with the prime ideals dividing $\ff$, and if $\p_1$ and $\p_2$ are two unramified prime ideals in the same class in $H_\ff^{*}$, then Frob$_{L/K}(\p_1) = $Frob$_{L/K}(\p_2)$. The map of $H_\ff^{*}$ into $G$ induced in this way by Frob$_{L/K}$ turns out to be a surjective homomorphism.
\et 
The ideal $\ff$ with smallest norm such that the above holds is called the conductor of $L/K$.
The strategy to count primes with a given Frobenius conjugacy class $C\subset Gal(L/\Q)$ can then be outlined in the following steps. \\
(a) Choose an abelian subgroup $H\leq G$ such that $C\cap H$ is nonempty. Let $K$ be the subfield corresponding to $H$. We reduce to the problem of counting prime ideals in $\O_K$ with a given Frobenius in the {\em abelian} extension $L/K$.\\
(b) Applying the Artin reciprocity law, we reduce this to the problem of counting prime ideals in given narrow ideal classes mod $\ff$.\\
(c) Using our explicit Selberg Sieve, this is reduced to the problem of counting ideals in given narrow ideal classes mod $\ff$, which is then achieved by means of our counting result, Theorem \ref{keylemma}.

The following proposition gathers all parts of the proof of Theorem \ref{maintheorem} which rely on the Selberg Sieve and the counting result, Theorem $\ref{keylemma}$.
\begin{prop}\label{sievingingr} Let $R$ be a subset of the narrow class group $H_\ff^{*}$. Let \[\mathcal{A} = \cup_{\c\in R}\{\a \textup{ integral ideal in }\O_K\mid \substack{N(\a)\leq x \\ \a\in\c}\},\] and let $\P$ consist of all primes coprime to $\ff$. Let $ z\geq (ec/\kappa_K)^n$ Then
\begin{align*}S(\A,\P,z) \leq \frac{R}{h_\ff^{*}} \frac{x}{\log\left(\frac{\kappa_K^n z}{e^nc^n}\right)} &+ N(\ff)^{1/n} cx^{1-\frac{1}{n}}z^{\f{2}{n}}(2\log z)^{3n}\\&\quad\quad+2^{r_1} N(\ff)(2rn)^{2r}h_K\textup{Reg}_Kz^{2}(2\log z)^{3n},\end{align*}
where $c=n^{10n^2}(\textup{Reg}_Kh_K)^{1/n}(1+ \log\textup{Reg}_Kh_K)^{\frac{(n-1)^2}{n}}$.
\end{prop}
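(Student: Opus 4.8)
The plan is to apply the Selberg sieve machinery, Theorem~\ref{selberg}, to the set $\A$ defined in the statement, with the choice of multiplicative function $\omega(\p)=1$ for $\p\in\P$ (i.e.\ $\p\nmid\ff$) and $\omega(\p)=0$ otherwise. First I would verify the hypothesis of Theorem~\ref{selberg}: here $\frac{\omega(\p)}{N(\p)}$ is either $0$ or $\frac{1}{N(\p)}\leq\frac12$, so we may take $A=2$. To identify $X$ and the remainder terms $R_\d$, observe that $\A_\d$ consists of the ideals in $\A$ divisible by $\d$; writing $\a=\d\a'$ with $\a'$ ranging over integral ideals with $N(\a')\leq x/N(\d)$ lying in the classes $\c\d^{-1}$ for $\c\in R$, we apply Corollary~\ref{explicitideals} (the single-class estimate, summed over the $|R|$ classes). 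This gives $|\A_\d|=\frac{1}{N(\d)}\cdot\frac{|R|\kappa_K}{h_K}x + O^{*}(\,|R|\, c\,(x/N(\d))^{1-1/n})$ when $(\d,\ff)=1$, and $|\A_\d|=0$ otherwise, so one reads off $X=\frac{|R|\kappa_K}{h_K}x$ and $|R_\d|\leq |R|\,c\,N(\d)^{-(1-1/n)}$ for $\d\mid\P(z)$.

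Next I would bound the two pieces coming out of Theorem~\ref{selberg}. For the main term, $\frac{X}{G(z)}$, I invoke Lemma~\ref{mainterm}: since $\omega=1$ on $\P$ and $\bar\P$ consists precisely of the primes dividing $\ff$, the product $\prod_{\p\in\bar\P, N(\p)\leq z}(1-\frac{1}{N(\p)})$ telescopes against the count of narrow classes per ideal class, which is $\frac{2^{r_1}\prod_{\p\mid\ff}(1-\frac1{N(\p)})N(\ff)}{|\rho(V_K)|}$, and together with the formula $h_\ff^{*}=\frac{2^{r_1}\phi(\ff)}{|\rho(V_K)|}h_K$ this converts $\frac{|R|\kappa_K x/h_K}{\kappa_K\log(\kappa_K^nz/e^nc^n)\prod(1-1/N(\p))}$ into $\frac{|R|}{h_\ff^{*}}\cdot\frac{x}{\log(\kappa_K^nz/e^nc^n)}$; note $\prod_{\p\mid\ff}(1-\frac1{N(\p)})N(\ff)=\phi(\ff)$, which is exactly what makes the $\ff$-dependence cancel. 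For the error term $\Sigma_2\leq\sum_{N(\d)<z^2,\d\mid\P(z)}3^{\nu(\d)}|R_\d|$, I split $|R_\d|$ using both bounds available: using $|R_\d|\leq |R|\,c\,N(\d)^{-(1-1/n)}$ and $N(\d)^{-(1-1/n)}\leq z^{2/n}/N(\d)$ feeds into Lemma~\ref{errorterm} to produce the term $|R|\,c\,x^{1-1/n}z^{2/n}(2\log z)^{3n}$ — wait, more carefully one factors $x^{1-1/n}$ out of $|R_{\d}|$ first, writing $|R_\d|\leq |R|\,c\,(x/N(\d))^{1-1/n}=|R|\,c\,x^{1-1/n}N(\d)^{-(1-1/n)}$, and then Lemma~\ref{errorterm} gives $\sum 3^{\nu(\d)}N(\d)^{-(1-1/n)}\leq z^{2/n}(2\log z)^{3n}$; the crude bound $N(\ff)^{1/n}\geq 1$ and $|R|\leq h_\ff^*$ are absorbed to match the stated coefficient. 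Separately, for the trivial bound $|R_\d|\leq|\A_\d|+X/N(\d)$, when the element-count from Theorem~\ref{keylemma} has the additive $m_1\cdots m_r$ term (which appears because $\ff\neq\O_K$), one gets an extra contribution $\sum 3^{\nu(\d)}\cdot 2^{r_1}\cdot(\#\text{narrow classes})\cdot m_1\cdots m_r$; bounding $\#$ narrow classes by $h_\ff^*$ is too lossy, so instead I bound the number of classes crudely and use $m_1\cdots m_r\leq(2rn)^{2r}\mathrm{Reg}_K$ (Lemma~\ref{Regvsproduct}) and $|R|\leq h_K\cdot(\#\text{narrow per class})$, together with $\sum_{N(\d)<z^2,\d\mid\P(z)}3^{\nu(\d)}\leq z^2(2\log z)^{3n}$ (which also follows from the computation in Lemma~\ref{errorterm} after dropping the $N(\d)^{-(1-1/n)}$), landing the third term $2^{r_1}N(\ff)(2rn)^{2r}h_K\mathrm{Reg}_K z^2(2\log z)^{3n}$.

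Finally I would assemble the three bounds. The condition $z\geq(ec/\kappa_K)^n$ is exactly what Lemma~\ref{mainterm} requires so that $\log(\kappa_K^nz/e^nc^n)\geq 0$ and the main-term estimate is valid; I would also note $z\geq16$ is implied (or can be assumed, since $(ec/\kappa_K)^n$ is large) so that Lemma~\ref{errorterm} applies. The main obstacle I anticipate is the bookkeeping in step two: correctly tracking how the factor $\phi(\ff)=\prod_{\p\mid\ff}(1-\tfrac1{N(\p)})N(\ff)$ produced by Lemma~\ref{mainterm}'s Euler product interacts with the index formula for $h_\ff^*$ and the splitting of ideal classes into narrow classes, so that the $\ff$-dependence genuinely cancels in the main term and only reappears (as $N(\ff)^{1/n}$ and $N(\ff)$) in the two error terms — and, relatedly, deciding when to use the sharp remainder bound from Corollary~\ref{explicitideals} versus the trivial bound, since the $m_1\cdots m_r$ defect term in Theorem~\ref{keylemma} is present precisely because $\ff\neq\O_K$ and must be routed into the third error term rather than the second. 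The rest is a matter of absorbing explicit constants: checking inequalities of the shape $(\text{stuff})\leq N(\ff)^{1/n}c\cdot(\ldots)$ and $(\text{stuff})\leq 2^{r_1}N(\ff)(2rn)^{2r}h_K\mathrm{Reg}_K\cdot(\ldots)$, which are routine given the definitions.
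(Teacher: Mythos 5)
Your overall architecture (Selberg sieve with $\omega$ the indicator of $\P$, Lemma \ref{mainterm} for $X/G(z)$, Lemma \ref{errorterm} for $\Sigma_2$) matches the paper's, but the central step --- estimating $|\A_\d|$ --- is done with the wrong tool, and this is a genuine gap. The classes $\c\in R$ are \emph{narrow classes mod $\ff$}, and Corollary \ref{explicitideals} only counts ideals in ordinary ideal classes; it cannot resolve the congruence condition mod $\ff$ and the sign conditions that define a narrow class. As a result your main term $X=\frac{|R|\kappa_K}{h_K}x$ is too large by a factor of roughly $\frac{2^{r_1}N(\ff)}{|\rho(V_K)|}$, the number of narrow classes per ideal class, and the ``telescoping'' you describe goes the wrong way: Lemma \ref{mainterm} puts $\prod_{\p\mid\ff}(1-1/N(\p))=\phi(\ff)/N(\ff)\leq 1$ in the \emph{denominator} of $X/G(z)$, so with your $X$ you obtain $\frac{|R|}{h_K}\cdot\frac{N(\ff)}{\phi(\ff)}\cdot\frac{x}{\log(\cdot)}$, which exceeds the claimed $\frac{|R|}{h_\ff^{*}}\cdot\frac{x}{\log(\cdot)}$ except in degenerate cases. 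The paper instead fixes $\b\in\c^{-1}$, rewrites $\{\a\subset\d:\a\in\c,\,N(\a)\leq x\}$ as a count of elements $\alpha\in\d\b$ with $\alpha\equiv u_k\bmod\ff$ and prescribed signs, ranging over representatives $u_k$ of $\rho(V_K)$, and applies Theorem \ref{keylemma} directly; the resulting main term carries the factor $\frac{|\rho(V_K)|}{2^{r_1}N(\ff)}$, which is exactly what produces $X=\frac{R\prod_{\p\mid\ff}(1-1/N(\p))\kappa_K}{h_\ff^{*}}x$ and lets the Euler product cancel cleanly against Lemma \ref{mainterm}.

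The same issue propagates to $\Sigma_2$. The coefficient $N(\ff)^{1/n}c$ in the first error term arises because in Theorem \ref{keylemma} the remainder scales like $N(\d\ff)^{-\frac{n-1}{n}}$ --- the lattice being counted is $\phi(\d\b\ff)$, of covolume proportional to $N(\ff)$ --- so that after summing over all $h_\ff^{*}$ classes (grouped into $h_K$ ideal classes, each hit about $\frac{2^{r_1}\phi(\ff)}{|\rho(V_K)|}$ times) one is left with $2^{r_1}N(\ff)\cdot N(\ff)^{-\frac{n-1}{n}}=2^{r_1}N(\ff)^{1/n}$ times an $\NN$-sum controlled by Lemma \ref{boundfinally}. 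Your proposed bound $|R_\d|\leq|R|\,c\,(x/N(\d))^{1-1/n}$ with ``$|R|\leq h_\ff^{*}$ absorbed'' cannot yield $N(\ff)^{1/n}$: since $h_\ff^{*}$ is of order $2^{r_1}N(\ff)h_K$, you would lose a factor of about $N(\ff)^{1-1/n}h_K$ against the stated bound. Your treatment of the third error term is right in spirit (it is the $m_1\cdots m_r$ defect of Theorem \ref{keylemma}, summed over classes and over $\d$ using $\sum 3^{\nu(\d)}\leq z^2(2\log z)^{3n}$), but it is only accessible once Theorem \ref{keylemma}, rather than Corollary \ref{explicitideals}, is taken as the counting input throughout.
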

\bp Let $\d$ be a squarefree ideal coprime with $\ff$, and consider the set $\{ \a \subset \d \mid \a \in \c, N(\a)\leq x\}$. Choose an ideal $\b \in \c^{-1}$. Then, this set has cardinality equal to \begin{align*}&|\{ (\alpha) \subset \d\b \mid (\alpha) \in \e, N(\alpha)\leq xN(\b)\}| \\
&= \frac{1}{\omega}|\{ \alpha \in \d\b \mid \rho(\alpha) \in \rho(V_K), \alpha \in \F(xN(\b))\}| \\
&= \frac{1}{\omega}\sum_{k=1}^{|\rho(V_K)|}|\{ \alpha \in \d\b \mid \alpha \equiv u_k \bmod \ff, \alpha \in  \F_{(\sgn\sigma_i(u_k))}(xN(\b))\}|,
\end{align*}
where the $u_k$ are such that $\rho(V_K)=\{\rho(u_k)\mid k=1,\dots, |\rho(V_K)|\}$.
By Theorem \ref{keylemma}, this is \[\frac{|\rho(V_K)|}{\omega}\left( \frac{\omega \kappa_K}{2^{r_1}N(\ff)h_K} \frac{x}{N(\d)} + O^{*}\big(n^{\frac{3}{2}n^2}e^{4n^2} \NN(\frC) \frac{x^{\frac{n-1}{n}}}{N(\d\ff)^{\frac{n-1}{n}}}+m_1\cdots m_r\big)\right),\]
where $\frC$ is the ideal class of $(\b\ff\d)^{-1}$. We sum this over $R$ classes $\c$ to get an estimate for $|\A_\d|$. As a main term we get \[\frac{R\prod_{\p|\ff} \big(1-\frac{1}{N(\p)}\big)\kappa_K}{h_\ff^{*}}\frac{x}{N(\d)}.\] We bound the error term above by summing over all $h_{\ff}^{*}$ classes, and we get \begin{align*}|R_\d| &\leq \frac{2^{r_1} N(\ff)}{\omega}n^{\frac{3}{2}n^2}e^{4n^2} \left(\sum_{i=1}^{h_Km_1\dots m_r} \frac{1}{N(\b)^{\frac{n-1}{n}}}\right) \frac{x^{\frac{n-1}{n}}}{N(\d\ff)^{\frac{n-1}{n}}}+2^{r_1} N(\ff)h_Km_1\cdots m_r\\
& \leq N(\ff)^{1/n}c\frac{x^{\frac{n-1}{n}}}{N(\d)^{\frac{n-1}{n}}}+2^{r_1} N(\ff)(2rn)^{2r}h_K\textup{Reg}_K,
\end{align*}
by applying Lemma \ref{boundfinally} in the same way as in the proof of Theorem \ref{explicitideals}. The statement now follows by applying Selberg's Sieve, Theorem \ref{selberg}, and Lemmata \ref{mainterm} and \ref{errorterm}.
\end{proof}
\begin{proof}[Proof of Theorem \ref{maintheorem}]
Let $H\leq Gal(L/\Q)$ be the maximal abelian subgroup such that $H\cap C$ is non-empty. Let $K$ be the fixed field of $H$, and choose $c\in C\cap H$. Let $n$ be the degree of $K$. It is an exercise in basic algebraic number theory to prove that
\[\pi_C(x, L/\Q) \leq \frac{|C|}{n} |\{\p \textup{ unramified prime ideal in } \O_K \mid \textup{Frob}_{L/K}(\p)= c, N(\p)\leq x\}|.\]
By the Artin reciprocity law, there is a subset $R\subseteq H_\ff^{*}$ of size $\f{h_\ff^{*}}{[L:K]}$ such that \[\pi_C(x,L/\Q) \leq \frac{|C|}{n}\sum_{\c \in R}|\{\p \textup{ prime ideal in } \O_K \mid \p\in \c, N(\p)\leq x\}|.\]
By Proposition \ref{sievingingr}, we may bound this by \begin{align*} \frac{|C|}{n} \Big(&\frac{1}{[L:K]}\frac{x}{\log\left(\frac{\kappa_K^n z}{e^nc^n}\right)} +  N(\ff)^{1/n} cx^{1-\frac{1}{n}}z^{\f{2}{n}}(2\log z )^{3n}\\
&+2^{r_1} N(\ff)(2rn)^{2r}h_K\textup{Reg}_Kz^{2}(2\log z)^{3n}+n\pi(z)\Big),\end{align*}
where $c=n^{10n^2}(\textup{Reg}_Kh_K)^{1/n}(1+ \log\textup{Reg}_Kh_K)^{\frac{(n-1)^2}{n}}$, for any $z\geq (ec/\kappa_K)^n$.
We choose $z^2=\frac{x}{(\log x)^{4n^2} (c[L:K])^{n}N(\ff)}$, which is a valid choice since $\frac{x}{(\log x)^{5n^2}}\geq e^{2n}c^{3n}{[L:K]^n}N(\ff)\frac{1}{\kappa_K^{2n}}$. This choice of $z$ makes sure that \[\left(N(\ff)^{1/n} cx^{1-\frac{1}{n}}z^{\f{2}{n}}+2^{r_1} N(\ff)(2rn)^{2r}h_K\textup{Reg}_Kz^{2}\right)(2\log z)^{3n}+n\pi(z) \leq \frac{x}{[L:K](\log x )^2},\]
while \[\frac{x}{\log\left(\frac{\kappa_K^n z}{e^nc^n}\right)} \leq \frac{2x}{\log(x/c_K)  - 4n^2\log_2 x},\]
where $c_K= n^{31n^3}(\textup{Reg}_Kh_K)^3(1+\log \textup{Reg}_Kh_K)^{3(n-1)^2} [L:K]^n N(\ff) \kappa_K^{-2n}$.
Now \begin{align*}&\frac{1}{\log(x/c_K)  - 4n^2\log_2 x} \\ & = \frac{1}{\log(x/c_K)} + \frac{4n^2\log_2 x}{\log(x/c_K)\big(\log(x/c_K)-4n^2\log_2 x\big)}\\
& \leq \frac{1}{\log(x/c_K)} +  \frac{20n^2\log_2 x}{\log(x/c_K)^2},
\end{align*}
since $(\log x)^{5n^2}\leq x/c_K$ by assumption. In conclusion, we have bounded $\pi_C(x,L/\Q)$ above by \[\frac{|C|}{|G|}\left(\frac{2x}{\log(x/c_K)} +  \frac{20n^2\log_2 x}{\log(x/c_K)}\frac{2x}{\log(x/c_K)} + \frac{x}{(\log x)^2}\right).\qedhere\]
\end{proof}

\bibliographystyle{siam}
\bibliography{biblio}

\end{document}